\newcommand{\lyxmathsym}[1]{\ifmmode\begingroup\def\b@ld{bold}
  \text{\ifx\math@version\b@ld\bfseries\fi#1}\endgroup\else#1\fi}
\numberwithin{equation}{section}
\numberwithin{figure}{section}
\newtheorem{theorem}{Theorem}[section]
\newtheorem{proposition}[theorem]{Proposition}\newtheorem{lemma}[theorem]{Lemma}\newtheorem{corollary}[theorem]{Corollary}\theoremstyle{definition}
\newtheorem{remark}[theorem]{Remark}\newtheorem{definition}[theorem]{Definition}\newtheorem{example}[theorem]{Example}\numberwithin{equation}{section}
\begin{document}
\title{Modular structure theory on Hom-Lie algebras}
\author{Dan Mao}
\address{D. Mao: School of Mathematics and Statistics, Northeast Normal University,
Changchun 130024, China}
\email{danmao678@nenu.edu.cn}
\author{Baoling Guan}
\address{B. Guan: College of Sciences, Qiqihar University, Qiqihar 161006,
China}
\email{baolingguan@126.com}
\author{Liangyun Chen$^{*}$}
\address{L. Chen: School of Mathematics and Statistics, Northeast Normal University,
Changchun 130024, China}
\email{chenly640@nenu.edu.cn}
\thanks{{*}Corresponding author.}
\thanks{\emph{MSC}(2020). 17A60, 17B35, 17B61.}
\thanks{\emph{Key words and phrases}. Hom-Lie Algebras, $p$-Structure, Restricted,
Restrictable, $p$-Envelopes.}
\thanks{ Supported by NSF of Jilin Province (No. YDZJ202201ZYTS589), NNSF
of China (Nos. 12271085, 12071405), the Fundamental Research Funds
for the Central Universities and the special project of basic business
for Heilongjiang Provincial Education Department (No. 145109133).}
\begin{abstract}
The aim of this paper is to transfer the restrictedness theory to
Hom-Lie algebras. The concept of restricted Hom-Lie algebras which
is introduced in \cite{BM2} will be used in this paper. First, the
existence of $p$-structures on a Hom-Lie algebra is studied and the
direct sum of restricted Hom-Lie algebras is analyzed. Then, the definition
of a restrictable Hom-Lie algebra is given and the equivalence relation
between restrictable Hom-Lie algebras and restricted Hom-Lie algebras
is constructed. Finally, the $p$-envelopes of a Hom-Lie algebra are
defined and studied.

\tableofcontents{}
\end{abstract}

\maketitle

\section{Introduction and preliminaries}

In physics, $q$-deformed Lie algebras naturally appeared when physicists
studied $q$-oscillators (see \cite{CKL}). In $q$-deformed Lie algebras,
the classical Jacobi identity was replaced by $q$-deformed Jacobi
identity. Then the class of algebras was formally defined as Hom-Lie
algebras in \cite{HLS,LS}, where the Hom-Jacobi identity was called
as Jacobi-like identity with the form

\[
\underset{x,y,z}{\leftturn}\left\langle (\varsigma+\mathrm{id})(x),\left\langle y,z\right\rangle _{\varsigma}\right\rangle _{\varsigma}=0.
\]

In \cite{LS}, Hom-Lie algebras were further generalized by twisting
not only the Jacobi identity, but also the skew-symmetry and the structure
map. Recently, many people attach importance to the study of Hom-Lie
algebras for its close relation to discrete and deformed vector fields
and differential calculus \cite{HLS,LS,LS2}. Particularly, the enveloping
algebras of Hom-Lie algebras were constructed and studied in \cite{Yau};
In \cite{MS}, the authors extended the 1-parameter formal deformation
theory to Hom-Lie algebras and constructed a cohomology theory adapted
to this deformation theory; Quadratic Hom-Lie algebras were introduced
and studied in \cite{BM}; All rigid complex 3-dimensional multiplicative
Hom-Lie algebras were obtained in \cite{AV} by studying all deformations
of multiplicative Hom-Lie algebras; Hom-Lie structures on tensor products
were studied and the representations of current Hom-Lie algebras were
considered in \cite{BMS}; In \cite{DS}, the authors investigated
the Nijenhuis operators on Hom-Lie algebras and defined the cohomology
associated to a Nijenhuis operator to study formal deformations on
a Hom-Lie algebra.

The concept of a restricted Lie algebra is attributable to Jacobson
(see \cite{Jac}). It plays a predominant role in modular Lie algebra
theory, especially in the classification of the finite-dimensional
simple modular Lie algebras \cite{SF,Str}. Because of its important
role in modular theory, restrictedness theory attracted great attention.
Bouarroudj, Lebedev, Leites and Shchepochkina studied the queerification
of restricted simple Lie algebras in characteristic 2 \cite{BLLS}.
Petrogradski and Wang generalized the concept to modular Lie superalgebras
in \cite{Pet} and \cite{WZ} respectively. Then Sun studied the restricted
envelopes of Lie superalgebras in \cite{SLGW}. Moreover, Guan \cite{GC}
and Bouarroudj \cite{BM2} defined restricted Hom-Lie algebras respectively.

Due to the importance of Hom-Lie algebras and restrictedness theory,
this paper attempts to transfer the restrictedness theory to Hom-Lie
algebras. Although Guan has studied the restrictedness theory of Hom-Lie
algebras in \cite{GC}, definition of restricted Hom-Lie algebras
therein is too limited to be appropriate to the consideration of some
problems (see \cite{BM2}). Therefore, the authors of \cite{BM2}
gave an alternative definition, which will be used in this paper.

The paper is organized as follows. In Section 1, after recalling and
giving several basic definitions, we point out the difference between
two definitions of restricted Hom-Lie algebras (Remark 1.9) and give
several examples (Examples 1.10, 1.11 and 1.12). In Section 2, we
give a sufficient condition of the existence of $p$-structures on
Hom-Lie algebras (Theorem 2.3). In addition, the direct sum of restricted
Hom-Lie algebras is studied. In particular, it is proved that the
direct sum of two restricted Hom-Lie algebras is still a restricted
Hom-Lie algebra (Proposition 2.4). As by-products, some properties
of the morphism of restricted Hom-Lie algebras are obtained (Thoerems
2.8 and 2.9). In Section 3, the definition of restrictable Hom-Lie
algebras is introduced and examples of which are shown (Examples 3.2,
3.3 and 3.4). Moreover, the equivalence relation between restricted
and restrictable Hom-Lie algebras is built (Theorem 3.5). In Section
4, we study the restricted envelopes of Hom-Lie algebras. We generalize
the notion of $p$-envelopes and its relatives to Hom-Lie algebras
and construct an elementary theory. As an application, the Hom-version
of Iwasawa's theorem is given (Theorem 4.8), which is a result on
the representation of a finite-dimensional Hom-Lie algebra.

Throughout this paper, $\mathbb{F}$ is the underlying field of prime
characteristic $p$ and $L$ is a Hom-Lie algebra over $\mathbb{F}$
unless otherwise specified. Next, we will recall some notions.

\begin{definition}(see \cite{She}) (1) A Hom-Lie algebra is a triple
$(L,[.,.]_{L},\alpha)$ consisting of a linear space \emph{L}, a skew-symmetric
bilinear map $[.,.]_{L}$ : $\Lambda{}^{2}$\emph{L} \textrightarrow{}
\emph{L} and a linear map $\alpha$ : $L$ \textrightarrow{} $L$
satisfying the following Hom-Jacobi identity:
\[
[\alpha(x),[y,z]_{L}]_{L}+[\alpha(y),[z,x]_{L}]_{L}+[\alpha(z),[x,y]_{L}]_{L}=0,
\]
for all $x,y,z\in L$;

(2) A Hom-Lie algebra is called a multiplicative Hom-Lie algebra if
$\alpha$ is an algebraic morphism, i.e., for any $x,y\in L$, we
have $\alpha([x,y]_{L})=[\alpha(x),\alpha(y)]_{L}$;

(3) A Hom-Lie algebra is called a regular Hom-Lie algebra if $\alpha$
is an algebraic automorphism;

(4) A sub-vector space $g\subset L$ is called a Hom-Lie subalgebra
of $(L,[.,.]_{L},\alpha)$ if $\alpha(g)\subset g$ and $g$ is closed
under the bracket operation $[.,.]_{L}$, i.e., $[x,y]_{L}\in g$
for all $x,y\in g$;

(5) A sub-vector space $g\subset L$ is called a Hom-Lie ideal of
$(L,[.,.]_{L},\alpha)$ if $\alpha(g)\subset g$ and $[x,y]_{L}\in g$
for all $x\in g,y\in L$.

\end{definition}

\begin{definition} (see \cite{BM}) A Hom-Lie algebra $(L,[.,.]_{L},\alpha)$
is called an involutive Hom-Lie algebra if it is multiplicative and
$\alpha$ is an involution, that is, $\alpha^{2}=\mathrm{id}$.

\end{definition}

\begin{definition} (see \cite{CH}) Let $(L,[.,.]_{L},\alpha)$ be
a Hom-Lie algebra. The sequence $(L^{n})_{n\in\mathbb{N}}$ of $L$
defined by $L^{1}:=L$, $L^{n+1}:=[L^{n},L]$ is called as the descending
central series of $L$. The Hom-Lie algebra $(L,[.,.]_{L},\alpha)$
is said to be nilpotent if there exists $k\in\mathbb{N}$ such that
$L^{k}=0$.

\end{definition}

\begin{definition} (see \cite{GC}) Let $(L,[.,.]_{L},\alpha)$ be
a multiplicative Hom-Lie algebra over $\mathbb{F}$ and $L_{1}=\{x\in L|\alpha(x)=x\}$.
A mapping $[p]:L_{1}\rightarrow L_{1}$, $x\mapsto x^{[p]}$ is called
a $p$-mapping, if

(1) $[\alpha(y),x^{[p]}]_{L}=(\mathrm{ad}x)^{p}(y),\forall x\in L_{1},y\in L$,

(2) $(kx)^{[p]}=k^{p}x^{[p]},\forall x\in L_{1},k\in\mathbb{F}$,

(3) $(x+y)^{[p]}=x^{[p]}+y^{[p]}+\sum_{i=1}^{p-1}s_{i}(x,y)$,

where $(\mathrm{ad}(x\otimes X+y\otimes1))^{p-1}(x\otimes1)=\sum_{i=1}^{p-1}is_{i}(x,y)\otimes X^{i-1}$
in $L\otimes_{\mathbb{F}}\mathbb{F}[X]$, $\forall x,y\in L_{1}$,
$\alpha(x\otimes X)=\alpha(x)\otimes X$. The quadruple $(L,[.,.]_{L},\alpha,[p])$
is referred to as a restricted Hom-Lie algebra.

\end{definition}

Since this definition is not appropriate to queerify a restricted
Hom-Lie algebras in characteristic $2$, an alternative definition
is given by \cite{BM2}, which is as follows:

\begin{definition}(see \cite{BM2}) Let $L$ be a multiplicative
Hom-Lie algebra in characteristic $p$ with a twist $\alpha$. A mapping
$[p]_{\alpha}:L\rightarrow L$ , $x\mapsto x^{[p]_{\alpha}}$ is called
a $p$-structure of $L$ and $L$ is said to be restricted if

(R1) $\mathrm{ad}(x^{[p]_{\alpha}})\lyxmathsym{\textopenbullet}\alpha^{p-1}=\mathrm{ad}(\alpha^{p-1}(x))\lyxmathsym{\textopenbullet}\mathrm{ad}(\alpha^{p-2}(x))\lyxmathsym{\textopenbullet}\cdot\cdot\cdot\lyxmathsym{\textopenbullet}\mathrm{ad}(x)$,
for all $x\in L$;

(R2) $(kx)^{[p]_{\alpha}}=k^{p}x^{[p]_{\alpha}}$, for all $x\in L$
and for all $k\in\mathbb{F}$;

(R3)$(x+y)^{[p]_{\alpha}}=x^{[p]_{\alpha}}+y^{[p]_{\alpha}}+\sum_{i=1}^{p-1}s_{i}(x,y)$,
where the $s_{i}(x,y)$ can be obtained from $\mathrm{ad}(\alpha^{p-2}(kx+y))\text{\textopenbullet}\mathrm{ad}(\alpha^{p-3}(kx+y))\text{\textopenbullet}...\text{\textopenbullet}\mathrm{ad}(kx+y)(x)=\sum_{i=1}^{p-1}is_{i}(a,b)k^{i-1}$.\end{definition}

Notice that in the case for $p=2$, the conditions (R1) and (R3) read, respectively, as
\[
[x^{[2]_\alpha},\alpha(y)]=[\alpha(x),[x,y]]\quad \textrm{and} \quad (x+y)^{[2]_\alpha}=x^{[2]_\alpha}+y^{[2]_\alpha}+[x,y].
\]

\begin{definition}Let $(L,[.,.]_{L},\alpha,[p]_{\alpha})$ be a restricted
Hom-Lie algebra. A Hom-Lie subalgebra (Hom-Lie ideal) $H$ of $L$
is said to be a $p$-subalgebra ($p$-ideal), if $x^{[p]_{\alpha}}\in H$,
for all $x\in H$.

\end{definition}

Let $S\subset L$ be a subset of a restricted Hom-Lie algebra $(L,[.,.]_{L},\alpha,[p]_{\alpha})$
and $S_{p}$ the intersection of all $p$-subalgebras containing $S$.
Clearly, $S_{p}$ is a $p$-subalgebra of $L$. It is usually called
as the $p$-subalgebra generated by $S$ in $L$.

\begin{definition}Let $(L,[.,.],\alpha)$ be a Hom-Lie algebra.

(1) A mapping $f:L\rightarrow L$ is called to be $p$-semilinear,
if $f(kx+y)=k^{p}f(x)+f(y)$, for all $x,y\in L,k\in\mathbb{F}$.

(2) Let $S$ be a subset of $L$, the $\alpha^{p-1}$-centralizer
of $S$ in $L$, which is denoted by $C_{L}(S)$, is defined as
\[
C_{L}(S)=\{x\in L|[x,\alpha^{p-1}(y)]=0,\forall y\in S\}.
\]

Particularly, if $S=L$, it is called the $\alpha^{p-1}$-center of
$L$ and it is denoted by $C(L)$.

\end{definition}

\begin{proposition} Let $(L,[.,.],\alpha)$ be a regular Hom-Lie
algebra and $[p]_{\alpha}$ a $p$-structure on $L$. Then its $\alpha^{p-1}$-center
$C(L)$ is a $p$-ideal of the regular restricted Hom-Lie algebra
$(L,[.,.],\alpha,[p]_{\alpha})$.

\end{proposition}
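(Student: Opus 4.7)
The plan is to exploit the regularity hypothesis aggressively: since $\alpha$ is an automorphism, $\alpha^{p-1}$ is a bijection on $L$, and so as $y$ runs through $L$ the element $\alpha^{p-1}(y)$ also runs through $L$. Consequently, the $\alpha^{p-1}$-center coincides with the ordinary center
\[
C(L)=\{x\in L\mid [x,z]=0\ \text{for all } z\in L\}.
\]
This reduction is the first step I would record, because it makes every subsequent verification essentially trivial.

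With this in hand, I would verify the three conditions required by Definition~1.6 in order. First, to show $\alpha$-invariance, take $x\in C(L)$ and any $z\in L$; by multiplicativity of $\alpha$,
\[
[\alpha(x),z]=\alpha([x,\alpha^{-1}(z)])=\alpha(0)=0,
\]
so $\alpha(x)\in C(L)$. Second, for the ideal property, note that for $x\in C(L)$ and any $y\in L$ one has $[x,y]=0\in C(L)$ by the reduction in Step~1, so $[C(L),L]\subset C(L)$ holds trivially.

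Third, the $p$-closedness is where axiom (R1) enters. For $x\in C(L)$ and arbitrary $y\in L$, apply (R1) evaluated at $y$:
\[
[x^{[p]_{\alpha}},\alpha^{p-1}(y)]=\bigl[\alpha^{p-1}(x),\bigl[\alpha^{p-2}(x),\ldots,[x,y]\ldots\bigr]\bigr].
\]
Since $[x,y]=0$ by Step~1, the innermost bracket vanishes, and so does the full nested expression. Because $y$ was arbitrary, this shows $x^{[p]_{\alpha}}\in C(L)$, completing the verification that $C(L)$ is a $p$-ideal.

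The only real subtlety is the reduction in Step~1: one must justify that regularity (as opposed to mere multiplicativity) forces $C(L)$ to coincide with the ordinary center. Once that equivalence is established, all three closure properties drop out immediately, and no Hom-Jacobi manipulation is needed for the ideal property nor for $p$-closedness beyond a single application of (R1).
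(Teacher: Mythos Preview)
Your proof is correct and is actually cleaner than the paper's. The key difference is your first step: you exploit regularity to observe up front that $\alpha^{p-1}$ is a bijection on $L$, so the $\alpha^{p-1}$-center coincides with the ordinary center $\{x\in L\mid [x,z]=0\ \forall z\in L\}$. With that reduction the ideal property becomes vacuous (since $[x,y]=0$ already), and $p$-closedness follows from one application of (R1) exactly as you wrote. The paper, by contrast, never makes this reduction: it works directly with the $\alpha^{p-1}$-center definition and, for the ideal property $[x,y]\in C(L)$, invokes the Hom-Jacobi identity to expand $[[x,y],\alpha^{p-1}(z)]$ into two terms, each of which is then massaged (using $\alpha^{-1}$) into the form $[x,\alpha^{p-1}(\cdot)]$ or $[\alpha^{p-1}(\cdot),x]$ so that it vanishes. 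Both arguments rely on invertibility of $\alpha$, so neither generalizes beyond the regular case; your route simply front-loads that use of regularity and thereby avoids the Hom-Jacobi computation entirely.
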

\begin{proof}
For any $x\in C(L)$, $y,z\in L$, we have
\[
[\alpha(x),\alpha^{p-1}(z)]=\alpha([x,\alpha^{p-1}(\alpha^{-1}(z))])=0
\]
and
\begin{align*}
[[x,y],\alpha^{p-1}(z)] & =[[y,\alpha^{p-2}(z)],\alpha(x)]+[[\alpha^{p-2}(z),x],\alpha(y)]\\
 & =[\alpha^{p}([\alpha^{-p}(y),\alpha^{-2}(z)]),\alpha(x)]+[[\alpha^{p-1}(\alpha^{-1}(z)),x],\alpha(y)]\\
 & =\alpha([\alpha^{p-1}([\alpha^{-p}(y),\alpha^{-2}(z)]),x])=0.
\end{align*}
It follows that $\alpha(x),[x,y]\in C(L)$. Therefore, $C(L)$ is
a Hom-Lie ideal of $L$.

Moreover, for any $x\in C(L)$, $y\in L$, it follows from
\[
[x^{[p]_{\alpha}},\alpha^{p-1}(y)]=[\alpha^{p-1}(x),[\cdots,[x,y]]]=[\alpha^{p-1}(x),[\cdots,[x,\alpha^{p-1}(\alpha^{1-p}(y))]]]=0
\]
that $x^{[p]_{\alpha}}\in C(L)$. Therefore, $C(L)$ is a $p$-ideal
of $L$.
\end{proof}
\begin{remark} In Definition $1.4$, the $p$-mapping of a restricted
Hom-Lie algebra is defined on $L_{1}$, where $L_{1}=\{x\in L|\alpha(x)=x\}$,
while the $p$-strcuture of a restricted Hom-Lie algebra in Definition
$1.5$ is defined on $L$. Furthermore, the conditions in Definition
$1.5$ are weaker than those in Definition $1.4$. In other words,
for a multiplicative Hom-Lie algebra $(L,[\cdot,\cdot]_{L},\alpha)$
, a given mapping $[p]:L\rightarrow L$ may make $L$ is restricted
under Definition $1.5$ but not under Definition $1.4$.

\end{remark}

This kind of examples will be provided here.

\begin{example} Let $(L^{-},[.,.]_{L})$ be a Lie algebra given by
an associative algebra $L$ via the commutator bracket $[.,.]_{L}$
and $f:L\rightarrow L,x\mapsto x^{p}$ the Frobenius mapping on $L$.
Then $(L^{-},[.,.]_{L},f)$ is a restricted Lie algebra (see \cite{SF}).
Given a Lie algebra morphism $\alpha$ on $L^{-}$, then $(L^{-},[.,.]_{\alpha},[p]_{\alpha},\alpha)$
is a restricted Hom-Lie algebra, where $[.,.]_{\alpha}=\alpha\text{\textopenbullet}[.,.]_{L}$
and $[p]_{\alpha}=\alpha^{p-1}\text{\textopenbullet}f$ (see \cite{BM2}).

\end{example}

\begin{example} For the two-dimensional nonabelian Lie algebra $L:=\mathbb{F}h\oplus\mathbb{F}x$
with the nontrivial bracket $[h,x]=x$, a mapping $[p]:L\rightarrow L$,
which is defined by means of $(\lambda h+\mu x)^{[p]}:=\lambda^{p}h+\lambda^{p-1}\mu x$,
$\forall\lambda,\mu\in\mathbb{F}$, is the unique $p$-mapping on
$L$ (see \cite{SF}). Let $\alpha:L\rightarrow L$ be a Lie algebra
morphism on $L$. Similarly, we obtain a restricted Hom-Lie algebra
$(L,[.,.]_{\alpha},\alpha,[p]_{\alpha})$ by setting $[.,.]_{\alpha}=\alpha\text{\textopenbullet}[.,.]$
and $x^{[p]_{\alpha}}=\alpha^{p-1}(x^{[p]})$ (see \cite{BM2}).

\end{example}

\begin{example} Let $(L,[.,.]_{L})$ be an abelian Lie algebra. Let
$\alpha:L\rightarrow L$ be a Lie algebra morphism and $[.,.]_{\alpha}=\alpha\text{\textopenbullet}[.,.]_{L}$
such that the triple $(L,[.,.]_{\alpha},\alpha)$ is an abelian multiplicative
Hom-Lie algebra. For a $p$-semilinear mapping $f:L\rightarrow L$,
let $f'=\alpha^{p-1}\circ f$. Then the quadruple $(L,[.,.]_{\alpha},\alpha,f')$
is a restricted Hom-Lie algebra.

\end{example}

Hereafter, the notion of restricted Hom-Lie algebras introduced in
\cite{BM2} (i.e. Definition 1.5) will be used.

\section{Properties of restricted Hom-Lie algebras}

Given a Hom-Lie algebra $L$, two questions naturally arise: Can we
define a $p$-structure on $L$? How many different $p$-structures
on $L$? The following lemma is given first as a preparation.

\begin{lemma}Let $(g,[.,.]_{g},\alpha,[p]_{\alpha})$ be a restricted
Hom-Lie algebra, $L\subset g$ a Hom-Lie subalgebra and $[p]_{\alpha}^{1}$
a mapping on $L$. Then the following statements are equivalent:

(1) $[p]_{\alpha}^{1}$ is a $p$-structure on $L$;

(2) There exists a $p$-semilinear mapping $f:L\rightarrow C_{g}(L)$
such that $[p]_{\alpha}^{1}=[p]_{\alpha}+f$.

\end{lemma}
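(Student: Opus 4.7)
The plan is to prove the two implications directly from axioms (R1)--(R3), exploiting the key observation that the universal polynomials $s_i(x,y)$ appearing in (R3) and the iterated adjoint $\mathrm{ad}(\alpha^{p-1}(x))\circ\cdots\circ\mathrm{ad}(x)$ appearing in (R1) depend only on the Hom-Lie bracket and the twist $\alpha$, not on the chosen $p$-structure. Hence these quantities coincide whether computed with respect to $[p]_\alpha$ or $[p]_\alpha^1$, provided all arguments lie in $L$ (which is legitimate since $L$ is a Hom-Lie subalgebra of $g$, so closed under the bracket and $\alpha$).

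For the direction $(2)\Rightarrow(1)$, I would assume $[p]_\alpha^1 = [p]_\alpha + f$ with $f\colon L \to C_g(L)$ $p$-semilinear, and verify each axiom for $[p]_\alpha^1$. Condition (R1) is immediate: for any $y\in L$ the definition of $C_g(L)$ gives $[f(x),\alpha^{p-1}(y)] = 0$, so $\mathrm{ad}(x^{[p]_\alpha^1})\circ\alpha^{p-1}$ agrees with $\mathrm{ad}(x^{[p]_\alpha})\circ\alpha^{p-1}$ on $L$, which equals the required iterated adjoint by (R1) for $[p]_\alpha$. Condition (R2) follows by combining $f(kx)=k^p f(x)$ with (R2) for $[p]_\alpha$, and (R3) follows by combining $f(x+y)=f(x)+f(y)$ with (R3) for $[p]_\alpha$, the $s_i$ terms being unchanged.

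For $(1)\Rightarrow(2)$, I would simply set $f := [p]_\alpha^1 - [p]_\alpha$ and verify the two required properties. To see $f(x) \in C_g(L)$ I would pick $y \in L$ and write
\[
[f(x),\alpha^{p-1}(y)] = [x^{[p]_\alpha^1},\alpha^{p-1}(y)] - [x^{[p]_\alpha},\alpha^{p-1}(y)],
\]
noting that both terms on the right reduce to the same iterated adjoint $\mathrm{ad}(\alpha^{p-1}(x))\circ\cdots\circ\mathrm{ad}(x)(y)$ by (R1) applied to $[p]_\alpha^1$ on $L$ and to $[p]_\alpha$ on $g$ respectively. For the $p$-semilinearity of $f$, I would apply (R2) and (R3) to both structures and subtract, whereupon the $s_i(kx,y)$ contributions cancel exactly and leave $f(kx+y) = k^p f(x) + f(y)$.

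The only point requiring care, and not really a genuine obstacle, is the matching of $s_i$ and of the iterated adjoint between $L$ and $g$; this is the backbone of every cancellation in the argument, but it is immediate from $L$ being a Hom-Lie subalgebra. No deeper ingredient is needed.
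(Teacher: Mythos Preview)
Your proposal is correct and follows essentially the same approach as the paper's own proof: in both directions one sets $f = [p]_\alpha^1 - [p]_\alpha$ (or conversely adds a given $f$), and the verification hinges on exactly the two cancellations you isolate, namely that the iterated adjoint in (R1) and the polynomials $s_i$ in (R3) depend only on the bracket and $\alpha$, not on the $p$-structure. The paper carries out the same computations, only slightly more tersely (it displays (R1) explicitly and defers (R2), (R3) in the direction $(2)\Rightarrow(1)$ to a reference), whereas you spell out all three axioms; your added remark about why these cancellations are legitimate in the subalgebra $L$ is a useful clarification the paper leaves implicit.
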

\begin{proof}
(1)$\Rightarrow$(2) Consider $f:L\rightarrow g,f(x)=x^{[p]_{\alpha}^{1}}-x^{[p]_{\alpha}}$.
Since

\begin{align*}
 & \left[f(x),\alpha^{p-1}(y)\right]\\
= & \left[x^{[p]_{\alpha}^{1}},\alpha^{p-1}(y)\right]-\left[x^{[p]_{\alpha}},\alpha^{p-1}(y)\right]\\
= & \mathrm{ad}(x^{[p]_{\alpha}^{1}})\text{\textopenbullet}\alpha^{p-1}(y)-\mathrm{ad}(x^{[p]_{\alpha}})\text{\textopenbullet}\alpha^{p-1}(y)\\
= & \mathrm{ad}(\alpha^{p-1}(x))\text{\textopenbullet}\mathrm{ad}(\alpha^{p-2}(x))\text{\textopenbullet}\cdot\cdot\cdot\text{\textopenbullet}\mathrm{ad}(x)(y)\\
 & -\mathrm{ad}(\alpha^{p-1}(x))\text{\textopenbullet}\mathrm{ad}(\alpha^{p-2}(x))\text{\textopenbullet}\cdot\cdot\cdot\text{\textopenbullet}\mathrm{ad}(x)(y)\\
= & 0,
\end{align*}

for all $x,y\in L$, $f$ actually maps $L$ into $C_{g}(L)$. For
$x,y\in L,k\in\mathbb{F}$, we obtain

\begin{align*}
f(kx+y) & =(kx+y)^{[p]_{\alpha}^{1}}-(kx+y)^{[p]_{\alpha}}\\
 & =k^{p}x^{[p]_{\alpha}^{1}}+y^{[p]_{\alpha}^{1}}+\sum_{i=1}^{p-1}s_{i}(kx,y)-k^{p}x^{[p]_{\alpha}}-y^{[p]_{\alpha}}-\sum_{i=1}^{p-1}s_{i}(kx,y)\\
 & =k^{p}(x^{[p]_{\alpha}^{1}}-x^{[p]_{\alpha}})+(y^{[p]_{\alpha}^{1}}-y^{[p]_{\alpha}})\\
 & =k^{p}f(x)+f(y).
\end{align*}

Therefore, $f$ is $p$-semilinear.

(2)$\Rightarrow$(1) Here, we only check the axiom (R1) in Definition
1.5. For all $x,y\in L$, we have

\begin{align*}
 & \mathrm{ad}(x^{[p]_{\alpha}^{1}})\text{\textopenbullet}\alpha^{p-1}(y)\\
= & \mathrm{ad}(x^{[p]_{\alpha}}+f(x))\text{\textopenbullet}\alpha^{p-1}(y)\\
= & \mathrm{ad}(x^{[p]_{\alpha}})\text{\textopenbullet}\alpha^{p-1}(y)\\
= & \mathrm{ad}(\alpha^{p-1}(x))\text{\textopenbullet}\mathrm{ad}(\alpha^{p-2}(x))\text{\textopenbullet}\cdot\cdot\cdot\text{\textopenbullet}\mathrm{ad}(x)(y).
\end{align*}

For the check of the other two axioms given in Definition 1.5, we
can refer to \cite{GC}.
\end{proof}
Thereby, we obtain the following corollary, the proof of which is
a complete analogue of that of Corollary 2.4 in \cite{GC}.

\begin{corollary}Let $L$ be a Hom-Lie algebra. Then the following
statements hold.

(1) If $C(L)=\{x\in L|[x,\alpha^{p-1}(y)],\forall y\in L\}=0$, then
$L$ admits at most one $p$-structure.

(2) If two $p$-structures coincide on a basis, then they are equal.

(3) If $(L,[.,.]_{L},\alpha,[p]_{\alpha})$ is restricted, then there
exists a $p$-structure $[p]_{\alpha}^{1}$ on $L$ such that $x^{[p]_{\alpha}^{1}}=0$,
$\forall x\in C(L)$.

\end{corollary}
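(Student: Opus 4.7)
The plan is to deduce all three assertions from Lemma~2.1 applied with $g = L$ (so $C_g(L) = C(L)$), which characterises the freedom in choosing a $p$-structure on $L$: any two $p$-structures $[p]_\alpha$ and $[p]_\alpha^1$ differ by a $p$-semilinear map $f : L \to C(L)$.

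Part~(1) is then immediate: if $C(L) = 0$ the only such $f$ is zero, so any two $p$-structures coincide. Part~(2) is equally short. If $[p]_\alpha$ and $[p]_\alpha^1$ agree on a basis $\{e_i\}_{i \in I}$, then $f = [p]_\alpha^1 - [p]_\alpha$ is $p$-semilinear and vanishes on every $e_i$; $p$-semilinearity gives $f(\sum_i k_i e_i) = \sum_i k_i^p f(e_i) = 0$, so $f \equiv 0$.

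Part~(3) is constructive. Starting from Proposition~1.9, which ensures that $C(L)$ is a $p$-ideal and hence that $[p]_\alpha$ carries $C(L)$ into $C(L)$, the idea is to produce a $p$-semilinear map $f : L \to C(L)$ whose restriction to $C(L)$ equals $-[p]_\alpha|_{C(L)}$; then $[p]_\alpha^1 := [p]_\alpha + f$ is a $p$-structure by Lemma~2.1 and vanishes on $C(L)$. Concretely, I would fix a basis $B_0$ of $C(L)$, extend to a basis $B$ of $L$, and define $f$ on $B$ by $f(e) := -e^{[p]_\alpha}$ for $e \in B_0$ and $f(e) := 0$ for $e \in B \setminus B_0$, extending by the rule $f(\sum_i k_i e_i) := \sum_i k_i^p f(e_i)$ so that $f$ is $p$-semilinear with image in $C(L)$.

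The main obstacle, and the only genuinely computational point, is to verify that the resulting $[p]_\alpha^1$ vanishes on the \emph{entire} subspace $C(L)$ rather than only on the chosen basis $B_0$. By $p$-semilinearity of $f$ this reduces to showing that $[p]_\alpha|_{C(L)}$ is already $p$-semilinear, i.e., that the correction terms $s_i(x,y)$ appearing in axiom~(R3) vanish for all $x, y \in C(L)$. Unwinding the defining identity $\mathrm{ad}(\alpha^{p-2}(kx+y)) \circ \cdots \circ \mathrm{ad}(kx+y)(x) = \sum_{i=1}^{p-1} i s_i(x,y) k^{i-1}$ and iterating the definition of $C(L)$, one checks that each nested bracket on the left collapses, forcing $s_i \equiv 0$ on $C(L) \times C(L)$. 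This is precisely the step that parallels the argument of \cite[Corollary 2.4]{GC} which the authors invoke for the remaining routine verifications.
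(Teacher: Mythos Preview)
Your approach via Lemma~2.1 with $g=L$ is exactly the intended one, and parts (1) and (2) are dispatched correctly; this is precisely the ``complete analogue of Corollary~2.4 in \cite{GC}'' the paper invokes in lieu of a proof.

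For part (3) your construction is also the standard one, but one step deserves more care. You invoke ``Proposition~1.9'' for $C(L)$ being a $p$-ideal; the relevant result is Proposition~1.8, and it explicitly requires $L$ to be \emph{regular}, a hypothesis the corollary does not state. Under regularity $\alpha^{p-1}$ is bijective, so $C(L)$ coincides with the ordinary center $\mathfrak{z}(L)=\{x:[x,L]=0\}$; then every $s_i(x,y)$ is an iterated bracket containing a central factor and vanishes on the nose, and likewise $e^{[p]_\alpha}\in C(L)$ for $e\in C(L)$, so your $f$ lands in $C(L)$ as required. Without regularity, however, your sentence ``iterating the definition of $C(L)$, one checks that each nested bracket on the left collapses'' is not justified: the innermost bracket in the defining expression for $s_i$ is $[kx+y,x]$, and membership in $C(L)$ only guarantees $[x,\alpha^{p-1}(L)]=0$, not $[x,L]=0$. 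The same obstruction affects the claim that $[p]_\alpha$ preserves $C(L)$. The paper itself glosses over this by deferring to \cite{GC} (whose framework differs), so your argument is at the same level of rigor as the source; just be aware that the regularity hypothesis, or at minimum surjectivity of $\alpha$, is tacitly doing the work in the step you label ``routine.''
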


Let $U_{HLie}(L)^{-}$ be the Hom-Lie algebra given by Hom-associative
algebra $U_{HLie}(L)$ via the commutator bracket, where $U_{HLie}(L)$
is the universal enveloping algebra of Hom-Lie algebra $L$ (see \cite{Yau}).
In the special case of $g=U_{HLie}(L)^{-}\supset L$, we have

\begin{theorem} Let $(e_{j})_{j\in J}$ be a basis of $L$ such that
there are $y_{j}\in L$ with

\[
\mathrm{ad}(\alpha^{p-1}(e_{j}))\text{\textopenbullet}\mathrm{ad}(\alpha^{p-2}(e_{j}))\text{\textopenbullet}\cdot\cdot\cdot\text{\textopenbullet}\mathrm{ad}(e_{j})=\mathrm{ad}_{\alpha}(y_{j}),
\]
where $\mathrm{ad}_{\alpha}(y_{j})=\mathrm{ad}(y_{j})\text{\textopenbullet}\alpha^{p-1}$.
Then there exists exactly one $p$-structure $[p]_{\alpha}:L\rightarrow L$
such that $e_{j}^{[p]_{\alpha}}=y_{j},\forall j\in J$.

\end{theorem}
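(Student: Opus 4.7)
The plan is to invoke Lemma 2.1 with the ambient restricted Hom-Lie algebra $g = U_{HLie}(L)^{-}$, whose (Frobenius-type) $p$-structure I will denote by $[p]_{\alpha}^{g}$. Uniqueness of the sought $p$-structure is immediate from Corollary 2.2(2), since any two $p$-structures that agree on the basis $(e_{j})_{j \in J}$ must coincide on all of $L$. The rest of the work is to construct such a $p$-structure.

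First, I would apply axiom (R1) to each $e_{j}$ inside $g$ and combine it with the hypothesis to obtain
$$\mathrm{ad}(e_{j}^{[p]_{\alpha}^{g}}) \circ \alpha^{p-1} = \mathrm{ad}(\alpha^{p-1}(e_{j})) \circ \cdots \circ \mathrm{ad}(e_{j}) = \mathrm{ad}(y_{j}) \circ \alpha^{p-1}.$$
Consequently $[e_{j}^{[p]_{\alpha}^{g}} - y_{j}, \alpha^{p-1}(z)] = 0$ for every $z \in L$, i.e.\ $y_{j} - e_{j}^{[p]_{\alpha}^{g}} \in C_{g}(L)$. Extending the assignment $e_{j} \mapsto y_{j} - e_{j}^{[p]_{\alpha}^{g}}$ by $p$-semilinearity then yields a well-defined $p$-semilinear map $f \colon L \to C_{g}(L)$; the image lies in $C_{g}(L)$ because the latter is a linear subspace closed under $p$-th power scalings.

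By Lemma 2.1 the map $[p]_{\alpha} := [p]_{\alpha}^{g}|_{L} + f$ fulfills (R1)--(R3) as a $p$-structure on $L$ viewed inside $g$. What still must be checked is that $[p]_{\alpha}$ actually takes values in $L$. On a basis element this is automatic: $e_{j}^{[p]_{\alpha}} = e_{j}^{[p]_{\alpha}^{g}} + (y_{j} - e_{j}^{[p]_{\alpha}^{g}}) = y_{j} \in L$. For a general element $x = \sum_{j} k_{j} e_{j}$, iterated application of (R2) and (R3) inside $g$ produces
$$x^{[p]_{\alpha}^{g}} = \sum_{j} k_{j}^{p}\, e_{j}^{[p]_{\alpha}^{g}} + \Phi(x),$$
where $\Phi(x)$ is a sum of the Jacobson-type correction terms $s_{i}(a,b)$ which, by their defining formula, are iterated commutators of elements of $L$ and therefore lie in $L$. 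Adding $f(x) = \sum_{j} k_{j}^{p}(y_{j} - e_{j}^{[p]_{\alpha}^{g}})$ cancels exactly the basis $p$-powers that might lie outside $L$ and leaves $x^{[p]_{\alpha}} = \sum_{j} k_{j}^{p} y_{j} + \Phi(x) \in L$.

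The main obstacle is this final step: controlling the image of the constructed map. It hinges on the observation that the $s_{i}$ appearing in (R3) are built from the Hom-Lie bracket alone, so they stay in the subalgebra $L$ during the inductive expansion of $x^{[p]_{\alpha}^{g}}$ over the chosen basis. Once this is in place, existence follows, and together with the uniqueness statement from Corollary 2.2(2) the theorem is proved.
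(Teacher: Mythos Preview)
Your proposal is correct and follows essentially the same route as the paper: work inside $g=U_{HLie}(L)^{-}$ with its Frobenius-type $p$-structure (the paper writes it explicitly as $x\mapsto\alpha^{p-1}(x^{p})$), show that $y_{j}-e_{j}^{[p]_{\alpha}^{g}}\in C_{g}(L)$, extend $p$-semilinearly to get $f$, and invoke Lemma~2.1 together with Corollary~2.2(2). The only cosmetic difference is how you verify that the resulting map lands in $L$: the paper packages this as a subspace argument, setting $V=\{x\in L\mid \alpha^{p-1}(x^{p})+f(x)\in L\}$ and observing that the expansion of $\alpha^{p-1}((kx+y)^{p})+f(kx+y)$ shows $V$ is a subspace containing the basis, whereas you expand $x^{[p]_{\alpha}^{g}}$ directly and note that the Jacobson terms $s_{i}$ are iterated Hom-brackets and hence remain in $L$. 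Both arguments rest on the same observation and are equivalent.
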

\begin{proof}
For $z\in L$, we have
\begin{align*}
0 & =\mathrm{(ad}(\alpha^{p-1}(e_{j}))\text{\textopenbullet}\mathrm{ad}(\alpha^{p-2}(e_{j}))\text{\textopenbullet}\cdot\cdot\cdot\text{\textopenbullet}\mathrm{ad}(e_{j})-\mathrm{ad}_{\alpha}(y_{j}))(z)\\
 & =[\alpha^{p-1}(e_{j}^{p})-y_{j},\alpha^{p-1}(z)].
\end{align*}

It follows that $\alpha^{p-1}(e_{j}^{p})-y_{j}\in C_{U_{HLie}(L)}(L),\forall j\in J$.
We define a $p$-semilinear mapping $f:L\rightarrow C_{U_{HLie}(L)}(L)$
by means of
\[
f\left(\sum c_{j}e_{j}\right):=\sum c_{j}^{p}(y_{j}-\alpha^{p-1}(e_{j}^{p})).
\]

Consider $V:=\{x\in L|\alpha^{p-1}(x^{p})+f(x)\in L\}$. The equation
\begin{align*}
 & \alpha^{p-1}((kx+y)^{p})+f(kx+y)\\
= & k^{p}\alpha^{p-1}(x^{p})+\alpha^{p-1}(y^{p})+\sum_{i=1}^{p-1}\alpha^{p-1}(s_{i}(kx,y))+k^{p}f(x)+f(y)
\end{align*}
ensures that $V$ is a subspace of $L$. Since $(e_{j})_{j\in J}\subset V$,
we have $\alpha^{p-1}(x^{p})+f(x)\in L,\forall x\in L$. By virtue
of Lemma 2.1, $[p]_{\alpha}:L\rightarrow L,x^{[p]_{\alpha}}:=\alpha^{p-1}(x^{p})+f(x)$
is a $p$-structure on $L$. Furthermore, we obtain $e_{j}^{[p]_{\alpha}}=\alpha^{p-1}(e_{j}^{p})+f(e_{j})=y_{j}$,
as asserted. The uniqueness of $[p]_{\alpha}$ follows from Corollary
2.2.
\end{proof}
Next, the direct sum of two restricted Hom-Lie algebras will be considered.

For two spaces $V$ and $V'$, $V\oplus V'$ denotes their direct
sum. Then the elements in the space $V\oplus V'$ have the form $x+y$,
where $x$ and $y$ are elements in $V$ and $V'$ , respectively.
We have the following result.

\begin{proposition}Given two restricted Hom-Lie algebras $(L,[.,.]_{L},\alpha,[p]_{\alpha})$
and $(L_{1},[.,.]_{L_{1}},$ $\beta,[p]_{\beta})$, there exists a
restricted Hom-Lie algebra $(L\oplus L_{1},[.,.]_{L\oplus L_{1}},\alpha+\beta,[p])$,
where the bilinear map $[.,.]_{L\oplus L_{1}}:L\oplus L_{1}\rightarrow L\oplus L_{1}$
is given by
\[
[x_{1}+y_{1},x_{2}+y_{2}]_{L\oplus L_{1}}=[x_{1},x_{2}]_{L}+[y_{1},y_{2}]_{L_{1}},\forall x_{1},x_{2}\in L,y_{1},y_{2}\in L_{1},
\]

the twisting map $\alpha+\beta:L\oplus L_{1}\rightarrow L\oplus L_{1}$
is given by
\[
(\alpha+\beta)(x+y)=\alpha(x)+\beta(y),\forall x\in L,y\in L_{1},
\]

and the $p$-structure $[p]:L\oplus L_{1}\rightarrow L\oplus L_{1}$
is given by
\[
(x+y)^{[p]}=x{}^{[p]_{\alpha}}+y{}^{[p]_{\beta}},\forall x\in L,y\in L_{1}.
\]

\end{proposition}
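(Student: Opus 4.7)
The plan is to verify the claim in two stages: first, that $(L\oplus L_{1},[.,.]_{L\oplus L_{1}},\alpha+\beta)$ is genuinely a multiplicative Hom-Lie algebra, and second, that the prescribed mapping $[p]$ satisfies the three axioms (R1), (R2), (R3) of Definition~1.5. The entire argument hinges on a single structural observation: the bracket is defined so that elements of $L$ and $L_{1}$ commute inside $L\oplus L_{1}$, i.e.\ $[x,y]_{L\oplus L_{1}}=0$ whenever $x\in L$ and $y\in L_{1}$. Consequently, for any $u=x+y\in L\oplus L_{1}$ with $x\in L$ and $y\in L_{1}$, the adjoint action splits as $\mathrm{ad}(u)=\mathrm{ad}_{L}(x)\oplus\mathrm{ad}_{L_{1}}(y)$, and likewise $(\alpha+\beta)^{k}(u)=\alpha^{k}(x)+\beta^{k}(y)$. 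This decomposition is what drives every subsequent computation.

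First I would check skew-symmetry and the Hom-Jacobi identity for $[.,.]_{L\oplus L_{1}}$: both reduce immediately to the corresponding identities in $L$ and $L_{1}$ because cross-terms vanish. Multiplicativity of $\alpha+\beta$ is a direct computation: $(\alpha+\beta)([x_{1}+y_{1},x_{2}+y_{2}]_{L\oplus L_{1}})=\alpha([x_{1},x_{2}]_{L})+\beta([y_{1},y_{2}]_{L_{1}})$, which equals $[(\alpha+\beta)(x_{1}+y_{1}),(\alpha+\beta)(x_{2}+y_{2})]_{L\oplus L_{1}}$ by multiplicativity of $\alpha$ and $\beta$ separately.

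For the $p$-structure axioms, (R2) is immediate from $p$-semilinearity in each component. For (R1), I would apply both sides to an arbitrary element $u+v\in L\oplus L_{1}$ and use the componentwise splitting of $\mathrm{ad}$ and $(\alpha+\beta)^{p-1}$: the left-hand side becomes $[x^{[p]_{\alpha}},\alpha^{p-1}(u)]_{L}+[y^{[p]_{\beta}},\beta^{p-1}(v)]_{L_{1}}$, and the right-hand side becomes the corresponding composition in each component, after which (R1) in $L$ and $L_{1}$ finishes the job.

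The main obstacle is (R3), where one must verify that the universally-defined coefficients $s_{i}(x_{1}+y_{1},x_{2}+y_{2})$ for the direct sum agree with $s_{i}(x_{1},x_{2})+s_{i}(y_{1},y_{2})$. The key point is that the defining expression $\mathrm{ad}(\alpha^{p-2}(k(x_{1}+y_{1})+(x_{2}+y_{2})))\circ\cdots\circ\mathrm{ad}(k(x_{1}+y_{1})+(x_{2}+y_{2}))(x_{1}+y_{1})$ again splits componentwise because brackets between $L$- and $L_{1}$-parts vanish; hence as a polynomial in $k$ it equals the sum of the corresponding expressions for $(x_{1},x_{2})$ in $L$ and $(y_{1},y_{2})$ in $L_{1}$. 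Comparing coefficients of $k^{i-1}$ yields the desired identity $s_{i}(x_{1}+y_{1},x_{2}+y_{2})=s_{i}(x_{1},x_{2})+s_{i}(y_{1},y_{2})$, from which (R3) for $[p]$ follows by combining (R3) for $[p]_{\alpha}$ and $[p]_{\beta}$. No part of the argument requires technology beyond the componentwise decomposition, so the proof is essentially a bookkeeping exercise once this observation is in place.
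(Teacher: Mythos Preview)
Your proposal is correct and follows essentially the same approach as the paper: both verify the Hom-Lie algebra axioms componentwise (the paper outsources this step to \cite{GC}) and then check (R1)--(R3) for $[p]$ by exploiting that cross-brackets between $L$ and $L_{1}$ vanish, so that $\mathrm{ad}$, $(\alpha+\beta)^{k}$, and the $s_{i}$ all decompose as direct sums. Your treatment of (R3) is in fact slightly more explicit than the paper's, since you spell out why $s_{i}(x_{1}+y_{1},x_{2}+y_{2})=s_{i}(x_{1},x_{2})+s_{i}(y_{1},y_{2})$ holds, whereas the paper simply writes down the resulting chain of equalities.
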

\begin{proof}
To check that the triple $(L\oplus L_{1},[.,.]_{L\oplus L_{1}},\alpha+\beta)$
is a Hom-Lie algebra, we can refer to \cite{GC}. The map $[p]:L\oplus L_{1}\rightarrow L\oplus L_{1}$
will be proved to be a $p$-structure on $L\oplus L_{1}$. Indeed,
for all $x_{1}+y_{1},x_{2}+y_{2}\in L\oplus L_{1}$, we have
\begin{align*}
 & \mathrm{ad}((x_{1}+y_{1})^{[p]})\text{\textopenbullet}(\alpha+\beta)^{p-1}(x_{2}+y_{2})\\
= & [(x_{1}+y_{1})^{[p]},\alpha{}^{p-1}(x_{2})+\beta{}^{p-1}(y_{2})]_{L\oplus L_{1}}\\
= & [x_{1}^{[p]_{\alpha}}+y_{1}^{[p]_{\beta}},\alpha{}^{p-1}(x_{2})+\beta{}^{p-1}(y_{2})]_{L\oplus L_{1}}\\
= & [x_{1}^{[p]_{\alpha}},\alpha{}^{p-1}(x_{2})]_{L}+[y_{1}^{[p]_{\beta}},\beta{}^{p-1}(y_{2})]_{L_{1}}\\
= & \mathrm{ad}x_{1}^{[p]_{\alpha}}\lyxmathsym{\textopenbullet}\alpha^{p-1}(x_{2})+\mathrm{ad}y_{1}^{[p]_{\beta}}\text{\textopenbullet}\beta^{p-1}(y_{2})\\
= & \mathrm{ad}(\alpha^{p-1}(x_{1}))\text{\textopenbullet}\mathrm{ad}(\alpha^{p-2}(x_{1}))\text{\textopenbullet}\cdot\cdot\cdot\text{\textopenbullet}\mathrm{ad}(x_{1})(x_{2})\\
 & +\mathrm{ad}(\beta^{p-1}(y_{1}))\text{\textopenbullet}\mathrm{ad}(\beta^{p-2}(y_{1}))\text{\textopenbullet}\cdot\cdot\cdot\text{\textopenbullet}\mathrm{ad}(y_{1})(y_{2})
\end{align*}

and
\begin{align*}
 & \mathrm{ad}((\alpha+\beta)^{p-1}(x_{1}+y_{1}))\text{\textopenbullet}\mathrm{ad}((\alpha+\beta)^{p-2}(x_{1}+y_{1}))\text{\textopenbullet}\cdot\cdot\cdot\text{\textopenbullet}\mathrm{ad}(x_{1}+y_{1})(x_{2}+y_{2})\\
= & \mathrm{ad}(\alpha^{p-1}(x_{1})+\beta{}^{p-1}(y_{1}))\text{\textopenbullet}\mathrm{ad}(\alpha{}^{p-2}(x_{1}\text{)}+\beta{}^{p-2}(y_{1}))\text{\textopenbullet}\cdot\cdot\cdot\text{\textopenbullet}\mathrm{ad}(x_{1}+y_{1})(x_{2}+y_{2})\\
= & \mathrm{ad}(\alpha^{p-1}(x_{1}))\text{\textopenbullet}\mathrm{ad}(\alpha^{p-2}(x_{1}))\text{\textopenbullet}\cdot\cdot\cdot\text{\textopenbullet}\mathrm{ad}(x_{1})(x_{2})\\
 & +\mathrm{ad}(\beta^{p-1}(y_{1}))\text{\textopenbullet}\mathrm{ad}(\beta^{p-2}(y_{1}))\text{\textopenbullet}\cdot\cdot\cdot\text{\textopenbullet}\mathrm{ad}(y_{1})(y_{2}).
\end{align*}

Therefore, we obtain
\begin{align*}
 & \mathrm{ad}((x_{1}+y_{1})^{[p]})\text{\textopenbullet}(\alpha+\beta)^{p-1}(x_{2}+y_{2})\\
= & \mathrm{ad}((\alpha+\beta)^{p-1}(x_{1}+y_{1}))\text{\textopenbullet}\mathrm{ad}((\alpha+\beta)^{p-2}(x_{1}+y_{1}))\text{\textopenbullet}\cdot\cdot\cdot\text{\textopenbullet}\mathrm{ad}(x_{1}+y_{1})(x_{2}+y_{2}).
\end{align*}

Moreover, for all $k\in\mathbb{F},x+y\in L\oplus L_{1}$, one gets
\begin{align*}
 & (k(x+y))^{[p]}=(kx+ky)^{[p]}\\
= & (kx)^{[p]_{\alpha}}+(ky)^{[p]_{\beta}}=k^{p}x{}^{[p]_{\alpha}}+k^{p}y{}^{[p]_{\beta}}=k^{p}(x+y){}^{[p]}.
\end{align*}

Finally, for all $x_{1}+y_{1},x_{2}+y_{2}\in L\oplus L_{1}$, we have
\begin{align*}
 & ((x_{1}+y_{1})+(x_{2}+y_{2}))^{[p]}\\
= & ((x_{1}+x_{2})+(y_{1}+y_{2}))^{[p]}=(x_{1}+x_{2})^{[p]_{\alpha}}+(y_{1}+y_{2}){}^{[p]_{\beta}}\\
= & x_{1}^{[p]_{\alpha}}+x_{2}^{[p]_{\alpha}}+\sum_{i=1}^{p-1}s_{i}(x_{1},x_{2})+y_{1}^{[p]_{\beta}}+y_{2}^{[p]_{\beta}}+\sum_{i=1}^{p-1}s_{i}(y_{1},y_{2})\\
= & (x_{1}^{[p]_{\alpha}}+y_{1}^{[p]_{\beta}})+(x_{2}^{[p]_{\alpha}}+y_{2}^{[p]_{\beta}})+\sum_{i=1}^{p-1}s_{i}(x_{1}+y_{1},x_{2}+y_{2})\\
= & (x_{1}+y_{1})^{[p]}+(x_{2}+y_{2})^{[p]}+\sum_{i=1}^{p-1}s_{i}(x_{1}+y_{1},x_{2}+y_{2}).
\end{align*}

It follows that $[p]$ is a $p$-structure on $L\oplus L_{1}$.

To sum up, $(L\oplus L_{1},[.,.]_{L\oplus L_{1}},\alpha+\beta,[p])$
is a restricted Hom-Lie algebra.
\end{proof}
For further discussion, let's recall the definitions of a (restricted)
morphism of a Hom-Lie algebra first.

\begin{definition} (see \cite{She}) Let $(L,[\cdot,\cdot]_{L},\alpha)$
and $(L_{1},[\cdot,\cdot]_{L_{1}},\alpha_{1})$ be two Hom-Lie algebras.
A linear map $f:L\rightarrow L_{1}$ is said to be a morphism of Hom-Lie
algebras if
\begin{align*}
f[x,y]_{L} & =[f(x),f(y)]_{L_{1}},\forall x,y\in L,\\
f\text{\textopenbullet}\alpha & =\alpha_{1}\text{\textopenbullet}f.
\end{align*}

\end{definition}

\begin{definition}(see \cite{GC}) A morphism of restricted Hom-Lie
algebras
\[
f:(L,[\cdot,\cdot]_{L},\alpha,[p]_{\alpha})\rightarrow(L_{1},[\cdot,\cdot]_{L_{1}},\alpha_{1},[p]_{\alpha_{1}})
\]
is said to be restricted if $f(x^{[p]_{\alpha}})=(f(x))^{[p]_{\alpha_{1}}}$,
for all $x\in L$.

\end{definition}

For a linear map $\phi:L\rightarrow L'$, let $\mathscr{\mathfrak{G}}_{\phi}=\{(x,\phi(x))|x\in L\}$
denote the graph of $\phi$. We obtain the following proposition,
which proof is analogous to that of Proposition 3.4 in \cite{GC}:

\begin{proposition} A linear map $\phi:(L,[\cdot,\cdot]_{L},\alpha,[p]_{\alpha})\rightarrow(L_{1},[\cdot,\cdot]_{L_{1}},\beta,[p]_{\beta})$
is a restricted morphism of restricted Hom-Lie algebras if and only
if the graph $\mathfrak{G}_{\phi}\subseteq L\oplus L_{1}$ is a restricted
Hom-Lie subalgebra of $(L\oplus L_{1},[\cdot,\cdot]_{L\oplus L_{1}},\alpha+\beta,[p])$,
which is a Hom-Lie algebra defined as in Proposition 2.4.

\end{proposition}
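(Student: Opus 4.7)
The plan is to reduce the statement to the tautology that a pair $(u,v)\in L\oplus L_1$ lies in $\mathfrak{G}_{\phi}$ if and only if $v=\phi(u)$, and then to translate each of the three morphism conditions (compatibility with bracket, with twist, with $p$-structure) into the corresponding closure property of $\mathfrak{G}_{\phi}$. Because the bracket, twist and $p$-structure on $L\oplus L_{1}$ were defined \emph{componentwise} in Proposition 2.4, this translation is essentially an unfolding of definitions in both directions. The linear structure of $\mathfrak{G}_{\phi}$ (stability under addition and scalar multiplication) is immediate from the linearity of $\phi$ and requires no comment.

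For the forward implication, assume $\phi$ is a restricted morphism. Using the three identities $\phi([x,y]_{L})=[\phi(x),\phi(y)]_{L_{1}}$, $\phi\circ\alpha=\beta\circ\phi$, and $\phi(x^{[p]_{\alpha}})=\phi(x)^{[p]_{\beta}}$, the componentwise formulas in Proposition 2.4 give, for all $x,y\in L$,
\begin{align*}
[(x,\phi(x)),(y,\phi(y))]_{L\oplus L_{1}} &= ([x,y]_{L},\phi([x,y]_{L}))\in\mathfrak{G}_{\phi},\\
(\alpha+\beta)(x,\phi(x)) &= (\alpha(x),\phi(\alpha(x)))\in\mathfrak{G}_{\phi},\\
(x,\phi(x))^{[p]} &= (x^{[p]_{\alpha}},\phi(x^{[p]_{\alpha}}))\in\mathfrak{G}_{\phi}.
\end{align*}
Hence $\mathfrak{G}_{\phi}$ is closed under the bracket, stable under $\alpha+\beta$, and a $p$-subalgebra, i.e.\ a restricted Hom-Lie subalgebra of $L\oplus L_{1}$.

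For the converse, suppose $\mathfrak{G}_{\phi}$ is a restricted Hom-Lie subalgebra. For any $x,y\in L$, closure under the bracket forces $([x,y]_{L},[\phi(x),\phi(y)]_{L_{1}})\in\mathfrak{G}_{\phi}$, and since the second coordinate of an element of $\mathfrak{G}_{\phi}$ is determined by the first via $\phi$, this yields $[\phi(x),\phi(y)]_{L_{1}}=\phi([x,y]_{L})$. The same extraction applied to $(\alpha+\beta)(x,\phi(x))=(\alpha(x),\beta(\phi(x)))\in\mathfrak{G}_{\phi}$ produces $\beta\circ\phi=\phi\circ\alpha$, so $\phi$ is a morphism of Hom-Lie algebras. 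Finally, closure under $[p]$ gives $(x^{[p]_{\alpha}},\phi(x)^{[p]_{\beta}})\in\mathfrak{G}_{\phi}$, which forces $\phi(x^{[p]_{\alpha}})=\phi(x)^{[p]_{\beta}}$, so $\phi$ is restricted.

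No real obstacle arises; the only point that might look delicate is verifying that the $p$-structure on $L\oplus L_{1}$ acts on a graph element $(x,\phi(x))$ in the expected way, but this is immediate from the rule $(x+y)^{[p]}=x^{[p]_{\alpha}}+y^{[p]_{\beta}}$ of Proposition 2.4 applied with the second entry equal to $\phi(x)$. Thus the argument parallels the proof of Proposition 3.4 in \cite{GC}, with the $p$-structure clause being the only genuinely new line.
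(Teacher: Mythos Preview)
Your proof is correct and follows precisely the approach the paper intends: the paper does not spell out its own argument but states that the proof ``is analogous to that of Proposition 3.4 in \cite{GC}'', which is exactly the graph argument you carry out, with the $p$-structure clause added. Your componentwise verification of bracket-closure, $(\alpha+\beta)$-stability, and $[p]$-closure in both directions matches that template, so there is nothing to add.
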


\begin{theorem} Let $(L,[\cdot,\cdot]_{L},\alpha,[p]_{\alpha})$
be a restricted Hom-Lie algebra, $(L_{1},[\cdot,\cdot]_{L_{1}},\beta)$
a multiplicative Hom-Lie algebra, and $f:L\rightarrow L_{1}$ a morphism
of Hom-Lie algebras. Then $(f(L),[\cdot,\cdot]_{L_{1}},\beta|_{f(L)},[p]_{\beta})$
is a restricted Hom-Lie algebra, where $[p]_{\beta}:f(L)\rightarrow f(L),f(x)\mapsto f(x^{[p]_{\alpha}})$.

\end{theorem}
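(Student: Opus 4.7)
My plan is to verify four things in sequence: first, $(f(L), [\cdot,\cdot]_{L_1}, \beta|_{f(L)})$ is a multiplicative Hom-Lie subalgebra of $L_1$; second, the map $[p]_\beta \colon f(x) \mapsto f(x^{[p]_\alpha})$ is well-defined on $f(L)$; and third, $[p]_\beta$ satisfies the three axioms of Definition 1.5.

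For the first step I would use the morphism conditions $\beta \circ f = f \circ \alpha$ and $f([x,y]_L) = [f(x), f(y)]_{L_1}$, which immediately give $\beta(f(L)) \subseteq f(L)$ and $[f(L), f(L)]_{L_1} \subseteq f(L)$, with multiplicativity of $\beta|_{f(L)}$ inherited from that of $\beta$.

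The second step is the main obstacle. Suppose $f(x_1) = f(x_2)$, and set $z = x_1 - x_2 \in \ker f$. Since $\ker f$ is a Hom-Lie ideal of $L$ (it is $\alpha$-stable and absorbs brackets with any element of $L$), applying axiom (R3) to $x_1 = x_2 + z$ and then pushing through $f$ yields
\[
f(x_1^{[p]_\alpha}) - f(x_2^{[p]_\alpha}) = f(z^{[p]_\alpha}) + \sum_{i=1}^{p-1} f(s_i(x_2, z)).
\]
Inspecting the defining formula for $s_i$, each $s_i(x_2, z)$ is a linear combination of iterated brackets in which $z$ occupies at least $p - i \geq 1$ of the $\mathrm{ad}$ positions, so $s_i(x_2, z) \in \ker f$ and its image vanishes. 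To conclude $f(z^{[p]_\alpha}) = 0$ I would apply $f$ to (R1) for $z$ and arbitrary $w \in L$: the right-hand side $\mathrm{ad}(\alpha^{p-1}(z)) \circ \cdots \circ \mathrm{ad}(z)(w)$ lies in $\ker f$ because $[z,w]_L$ does and $\ker f$ is $\alpha$-stable, which after pushing through $f$ gives $[f(z^{[p]_\alpha}), \beta^{p-1}(u)]_{L_1} = 0$ for every $u \in f(L)$. Turning this vanishing of the adjoint action into $f(z^{[p]_\alpha}) = 0$ itself is the delicate point of the argument, and it is handled by the normalization of Corollary 2.2(3), which permits one to replace $[p]_\alpha$ by an equivalent $p$-structure vanishing on the $\alpha^{p-1}$-center.

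For the final step, once well-definedness is in hand the three axioms on $f(L)$ descend from those on $L$ by applying $f$ and using the morphism relations. Since the $s_i(x, y)$ are universal iterated bracket expressions in $x$ and $y$, and $f$ is a bracket morphism intertwining $\alpha$ and $\beta$, one has $f(s_i(x, y)) = s_i(f(x), f(y))$; combined with the $\mathbb{F}$-linearity of $f$ this yields (R2) and (R3) directly. For (R1) I would compute
\[
[f(x)^{[p]_\beta}, \beta^{p-1}(f(y))]_{L_1} = f\bigl([x^{[p]_\alpha}, \alpha^{p-1}(y)]_L\bigr),
\]
then invoke (R1) for $[p]_\alpha$ on the right-hand side and push $f$ through the iterated $\mathrm{ad}$'s using $f \circ \alpha = \beta \circ f$ to recover $\mathrm{ad}(\beta^{p-1}(f(x))) \circ \cdots \circ \mathrm{ad}(f(x))(f(y))$, as required.
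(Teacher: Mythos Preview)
Your subalgebra verification and your checking of axioms (R1)--(R3) are correct and match the paper's argument; the paper only spells out (R1) and declares (R2), (R3) routine, so you are at least as thorough there. You are also right to flag well-definedness of $[p]_\beta$, an issue the paper's proof passes over in silence.

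However, your resolution via Corollary 2.2(3) does not work. That corollary lets you replace $[p]_\alpha$ by a $p$-structure vanishing on $C(L)$, but your element $z = x_1 - x_2$ lies in $\ker f$, which need not be contained in $C(L)$; what you actually established is that $f(z^{[p]_\alpha})$ lies in $C_{L_1}(f(L))$, a statement about the image, not about $z$. Modifying $[p]_\alpha$ would moreover change the map $[p]_\beta$ specified in the statement, so you would be proving a different theorem. In fact, well-definedness genuinely fails without further hypotheses: take $L = \mathbb{F}e_1 \oplus \mathbb{F}e_2$ abelian with $\alpha = \mathrm{id}$ and $(\lambda e_1 + \mu e_2)^{[p]_\alpha} = \lambda^p e_2$ (cf.\ Example~1.12), $L_1 = \mathbb{F}e$ abelian with $\beta = \mathrm{id}$, and $f(e_1) = 0$, $f(e_2) = e$. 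Then $f(e_1) = f(0)$ but $f(e_1^{[p]_\alpha}) = e \neq 0 = f(0^{[p]_\alpha})$. So the gap you identified is real and is a defect in the paper's theorem as stated, not something your argument can close.
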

\begin{proof}
For $f(x),f(y)\in f(L)$, we have $[f(x),f(y)]_{L_{1}}=f([x,y]_{L})\in f(L)$
and $\beta(f(x))=f(\alpha(x))\in f(L)$. Therefore, $(f(L),[\cdot,\cdot]_{L_{1}},\beta|_{f(L)})$
is a Hom-Lie subalgebra of $L_{1}$. Next, the map $[p]_{\beta}:f(L)\rightarrow f(L)$
will be proved to be a $p$-structure on $f(L)$ and $(f(L),[\cdot,\cdot]_{L_{1}},$
$\beta|_{f(L)},[p]_{\beta})$ is restricted.

The check of axioms (R2) and (R3) given in Definition $1.5$ is a
routine. We will show the verification of axiom (R1) detailedly. For
$f(x),f(y)\in f(L)$, one gets
\begin{align*}
 & \mathrm{ad}(f(x)^{[p]_{\beta}})\text{\textopenbullet}\beta^{p-1}(f(y))=[f(x)^{[p]_{\beta}},\beta^{p-1}(f(y))]_{L_{1}}\\
= & [f(x^{[p]_{\alpha}}),f(\alpha^{p-1}(y))]_{L_{1}}=f([x^{[p]_{\alpha}},\alpha^{p-1}(y)]_{L})=f(\mathrm{ad}(x^{[p]_{\alpha}})\circ\alpha^{p-1}(y))\\
= & f(\mathrm{ad}(\alpha^{p-1}(x))\text{\textopenbullet}\mathrm{ad}(\alpha^{p-2}(x))\text{\textopenbullet}\cdot\cdot\cdot\text{\textopenbullet}\mathrm{ad}(x)(y))\\
= & f([\alpha^{p-1}(x),[\alpha^{p-2}(x),\cdot\cdot\cdot,[x,y]_{L}]_{L}]_{L})\\
= & [f(\alpha^{p-1}(x)),f([\alpha^{p-2}(x),\cdot\cdot\cdot,[x,y]_{L}]_{L})]_{L_{1}}\\
= & [f(\alpha^{p-1}(x)),[f(\alpha^{p-2}(x)),\cdot\cdot\cdot,[f(x),f(y)]_{L_{1}}]_{L_{1}}]_{L_{1}}\\
= & [\beta^{p-1}(f(x)),[\beta^{p-2}(f(x)),\cdot\cdot\cdot,[f(x),f(y)]_{L_{1}}]_{L_{1}}]_{L_{1}}\\
= & \mathrm{ad}(\beta^{p-1}(f(x)))\circ\mathrm{ad}(\beta^{p-2}(f(x)))\circ\cdot\cdot\cdot\circ\mathrm{ad}(f(x))(f(y)),
\end{align*}
that is, axiom (R1) holds. The proof is complete.
\end{proof}
\begin{theorem} Let $(L,[\cdot,\cdot]_{L},\alpha)$ and $(L_{1},[\cdot,\cdot]_{L_{1}},\beta)$
be multiplicative Hom-Lie algebras and $f:L\rightarrow L_{1}$ a morphism
of Hom-Lie algebras. If $g$ is a Hom-Lie subalgebra of $L_{1}$ and
$[p]_{\beta}:g\rightarrow g$ is a $p$-structure, then $(f^{-1}(g),[\cdot,\cdot]_{L},\alpha|_{f^{-1}(g)})$
is a Hom-Lie subalgebra of $L$ and $[p]_{\alpha}:f^{-1}(g)\rightarrow f^{-1}(g),x\mapsto f^{-1}(f(x)^{[p]_{\beta}})$
is a $p$-structure on $f^{-1}(g)$.

\end{theorem}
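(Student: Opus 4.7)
The plan is to separate the theorem into two independent tasks: showing that $f^{-1}(g)$ is a Hom-Lie subalgebra of $L$, and then verifying that the proposed map $[p]_{\alpha}$ is indeed a $p$-structure on it.

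For the first task, I would argue directly. Given $x,y \in f^{-1}(g)$, the morphism property yields $f([x,y]_L) = [f(x),f(y)]_{L_1} \in g$ because $g$ is closed under the bracket, hence $[x,y]_L \in f^{-1}(g)$. Similarly, $f(\alpha(x)) = \beta(f(x)) \in g$ since $\beta(g) \subset g$, so $\alpha(f^{-1}(g)) \subset f^{-1}(g)$. Thus $(f^{-1}(g),[\cdot,\cdot]_L,\alpha|_{f^{-1}(g)})$ inherits the Hom-Lie algebra structure. This step is routine.

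The main obstacle is the well-definedness of $[p]_{\alpha}$. As written, $x \mapsto f^{-1}(f(x)^{[p]_{\beta}})$ requires both that $f(x)^{[p]_{\beta}}$ actually lies in $f(f^{-1}(g)) = f(L) \cap g$, and that its preimage in $f^{-1}(g)$ is unambiguous. The first point follows from the argument in Theorem~2.8 applied to the image $f(L)\cap g$, but the second point only holds if $f$ restricted to $f^{-1}(g)$ is injective (or, more generally, if $\ker f \subset f^{-1}(g)$ is annihilated by the construction). My plan is to treat $f^{-1}$ as the inverse on the image in the injective case, and in the general case to first select a section of $f$ over $f(L) \cap g$ and then show that the resulting map descends to a well-defined $p$-structure; this is the step where one must be careful.

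Once well-definedness is secured, the verification of axioms (R1)--(R3) proceeds by transporting the identities from $g$ through $f$. For (R1), applying $f$ to $[x^{[p]_{\alpha}},\alpha^{p-1}(y)]_L$ gives
\begin{align*}
f\bigl([x^{[p]_{\alpha}},\alpha^{p-1}(y)]_L\bigr) &= [f(x)^{[p]_{\beta}},\beta^{p-1}(f(y))]_{L_1} \\
 &= \mathrm{ad}(\beta^{p-1}(f(x)))\text{\textopenbullet}\cdots\text{\textopenbullet}\mathrm{ad}(f(x))(f(y)) \\
 &= f\bigl(\mathrm{ad}(\alpha^{p-1}(x))\text{\textopenbullet}\cdots\text{\textopenbullet}\mathrm{ad}(x)(y)\bigr),
\end{align*}
using that $[p]_{\beta}$ satisfies (R1) on $g$ together with $f\text{\textopenbullet}\alpha=\beta\text{\textopenbullet}f$ and the bracket compatibility. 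Applying the chosen section (or invoking injectivity of $f$ on $f^{-1}(g)$) then yields (R1) on $f^{-1}(g)$. For (R2) and (R3), the same push-forward strategy works: apply $f$ to both sides, use that $(kx)^{[p]_{\beta}} = k^p f(x)^{[p]_{\beta}}$ and the additive identity of $[p]_{\beta}$ on $g$, and observe that $f$ transports the correction terms $s_i(x,y)$ on $L$ to the analogous terms on $g$, since these are polynomial expressions in brackets. This mirrors the structure of the proof of Theorem~2.8 and completes the argument.
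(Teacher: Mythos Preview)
Your overall strategy---show $f^{-1}(g)$ is a Hom-Lie subalgebra, then verify (R1)--(R3) by pushing everything through $f$ and invoking the $p$-structure on $g$---is exactly what the paper does. The paper's computation of (R1) simply writes $[x^{[p]_{\alpha}},\alpha^{p-1}(y)]_{L}=[f^{-1}(f(x)^{[p]_{\beta}}),f^{-1}(f(\alpha^{p-1}(y)))]_{L}=f^{-1}([f(x)^{[p]_{\beta}},\beta^{p-1}(f(y))]_{L_{1}})$ and proceeds, treating $f^{-1}$ as an honest homomorphism. You arrive at the same identity by applying $f$ to both sides instead; the content is identical.

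You are right to flag well-definedness: the paper's proof silently treats $f^{-1}$ as single-valued and multiplicative, which only makes sense when $f$ is injective (note $\ker f\subset f^{-1}(g)$ always, so injectivity on $f^{-1}(g)$ forces global injectivity). However, your proposed workaround via a section does not rescue the general case. Choosing a section $s$ and setting $x^{[p]_{\alpha}}=s(f(x)^{[p]_{\beta}})$, your computation only shows
\[
f\bigl([x^{[p]_{\alpha}},\alpha^{p-1}(y)]_{L}\bigr)=f\bigl(\mathrm{ad}(\alpha^{p-1}(x))\text{\textopenbullet}\cdots\text{\textopenbullet}\mathrm{ad}(x)(y)\bigr),
\]
so the two sides of (R1) agree \emph{modulo} $\ker f$, not on the nose. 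A section does not force that difference to vanish; different sections change $x^{[p]_{\alpha}}$ by elements of $\ker f$, and even if $\ker f$ were central in $f^{-1}(g)$ that would only make (R1) insensitive to the choice of section, not guarantee it holds for any choice. The same obstruction hits (R3): $s(u+v)$ need not equal $s(u)+s(v)$, so $(x+y)^{[p]_{\alpha}}-x^{[p]_{\alpha}}-y^{[p]_{\alpha}}-\sum s_i(x,y)$ will in general be a nonzero element of $\ker f$. In short, the theorem as stated (and as proved in the paper) tacitly requires $f$ injective; your proof should either add that hypothesis or acknowledge that the non-injective case is not covered by the section argument.
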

\begin{proof}
Firstly, $f^{-1}(g)$ will be shown to be a Hom-Lie subalgebra of
$L$. For $x,y\in f^{-1}(g)$, one gets $f(x),f(y)\in g$. It follows
that $f([x,y]_{L})=[f(x),f(y)]_{L_{1}}\in g$ and $f(\alpha(x))=\beta(f(x))\in g$.
Therefore, $[x,y]_{L}\in f^{-1}(g)$ and $\alpha(x)\in f^{-1}(g)$.

Secondly, we will show that $[p]_{\alpha}:f^{-1}(g)\rightarrow f^{-1}(g)$
is a $p$-structure. Indeed, for $x,y\in f^{-1}(g)$, there exist
$u,v\in g$ such that $f(x)=u,f(y)=v$. Therefore,
\begin{align*}
 & \mathrm{ad}(x^{[p]_{\alpha}})\text{\textopenbullet}\alpha^{p-1}(y)=[x^{[p]_{\alpha}},\alpha^{p-1}(y)]_{L}\\
= & [f^{-1}(f(x)^{[p]_{\beta}}),f^{-1}(f(\alpha^{p-1}(y)))]_{L}=f^{-1}([f(x)^{[p]_{\beta}},f(\alpha^{p-1}(y))]_{L_{1}})\\
= & f^{-1}([f(x)^{[p]_{\beta}},\beta^{p-1}(f(y))]_{L_{1}})=f^{-1}([u^{[p]_{\beta}},\beta^{p-1}(v)]_{L_{1}})\\
= & f^{-1}(\mathrm{ad}(u^{[p]_{\beta}})\circ\beta^{p-1}(v))=f^{-1}(\mathrm{ad}(\beta^{p-1}(u))\text{\textopenbullet}\mathrm{ad}(\beta^{p-2}(u))\text{\textopenbullet}\cdot\cdot\cdot\text{\textopenbullet}\mathrm{ad}(u)(v))\\
= & f^{-1}([\beta^{p-1}(u),[\beta^{p-2}(u),\cdot\cdot\cdot,[u,v]_{L_{1}}]_{L_{1}}]_{L_{1}})\\
= & [f^{-1}(\beta^{p-1}(u)),f^{-1}([\beta^{p-2}(u),\cdot\cdot\cdot,[u,v]_{L_{1}}]_{L_{1}})]_{L}\\
= & [f^{-1}(\beta^{p-1}(u)),[f^{-1}(\beta^{p-2}(u)),\cdot\cdot\cdot,[f^{-1}(u),f^{-1}(v)]_{L}]_{L}]_{L}\\
= & [\alpha^{p-1}(f^{-1}(u)),[\alpha^{p-2}(f^{-1}(u)),\cdot\cdot\cdot,[f^{-1}(u),f^{-1}(v)]_{L}]_{L}]_{L}\\
= & [\alpha^{p-1}(x),[\alpha^{p-2}(x),\cdot\cdot\cdot,[x,y]_{L}]_{L}]_{L}\\
= & \mathrm{ad}(\alpha^{p-1}(x))\text{\textopenbullet}\mathrm{ad}(\alpha^{p-2}(x))\text{\textopenbullet}\cdot\cdot\cdot\text{\textopenbullet}\mathrm{ad}(x)(y).
\end{align*}

Until now, the check of axiom (R1) is complete. Moreover, the check
of axioms (R2) and (R3) is a routine, see \cite{Sha} for more details.
The proof is complete.
\end{proof}

\section{Restrictable Hom-Lie Algebras}

\begin{definition}

A multiplicative Hom-Lie algebra $(L,[.,.]_{L},\alpha)$ is said to
be restrictable if
\[
\mathrm{ad}(\alpha^{p-1}(x))\text{\textopenbullet}\mathrm{ad}(\alpha^{p-2}(x))\text{\textopenbullet}\cdot\cdot\cdot\text{\textopenbullet}\mathrm{ad}(x)\in\mathrm{ad}_{\alpha}(L),
\]
for all $x\in L$, where $\mathrm{ad_{\alpha}}L=\{\mathrm{ad}_{\alpha}(x)|x\in L$\}.

\end{definition}

Several examples are shown here.

\begin{example} Any abelian Hom-Lie algebra is restrictable.

\end{example}

\begin{example} Let $(L,[.,.]_{L},\alpha)$ be a nilpotent Hom-Lie
algebra such that $L^{p+1}=0$. Then $\mathrm{ad}(\alpha^{p-1}(x))\text{\textopenbullet}\mathrm{ad}(\alpha^{p-2}(x))\text{\textopenbullet}\cdot\cdot\cdot\text{\textopenbullet}\mathrm{ad}(x)=0$,
$\forall x\in L$. If $\mathrm{dim}L<p+2$, then $(L,[.,.]_{L},\alpha)$
is restrictable.

\end{example}

\begin{example} Consider the two-dimensional nonabelian Lie algebra
$L:=\mathbb{F}h\oplus\mathbb{F}x$ with the nonzero bracket $[h,x]=x$.
Let $\alpha:L\rightarrow L$ be a mapping defined by $\alpha(h)=h,\alpha(x)=0$
and $[\cdot,\cdot]_{\alpha}=\alpha\circ[\cdot,\cdot]$. Then the triple
$(L,[.,.]_{\alpha},\alpha)$ is a Hom-Lie algebra. Furthermore, $\mathrm{ad}(\alpha^{p-1}(h))\text{\textopenbullet}\mathrm{ad}(\alpha^{p-2}(h))\text{\textopenbullet}\cdot\cdot\cdot\text{\textopenbullet}\mathrm{ad}(h)=\mathrm{ad}_{\alpha}(h)\in\mathrm{ad}_{\alpha}(L)$
and $\mathrm{ad}(\alpha^{p-1}(x))\text{\textopenbullet}\mathrm{ad}(\alpha^{p-2}(x))\text{\textopenbullet}\cdot\cdot\cdot\text{\textopenbullet}\mathrm{ad}(x)=\mathrm{ad}_{\alpha}(x)\in\mathrm{ad}_{\alpha}(L)$.
It implies that $(L,[.,.]_{\alpha},\alpha)$ is restrictable.

\end{example}

In fact, there exists a equivalence relation between restrictable
Hom-Lie algebras and restricted Hom-Lie algebras. The details are
as follows:

\begin{theorem} A multiplicative Hom-Lie algebra $(L,[.,.]_{L},\alpha)$
is restrictable if and only if there exists a $p$-structure $[p]_{\alpha}:L\rightarrow L$
which makes $L$ a restricted Hom-Lie algebra.

\end{theorem}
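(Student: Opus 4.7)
The plan is to dispatch the two implications separately, with the forward direction being essentially immediate from the definitions and the backward direction reducing to Theorem 2.3.

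For the direction $(\Rightarrow)$, I assume there exists a $p$-structure $[p]_{\alpha}$ making $L$ restricted. By axiom (R1), for every $x\in L$,
\[
\mathrm{ad}(\alpha^{p-1}(x))\text{\textopenbullet}\mathrm{ad}(\alpha^{p-2}(x))\text{\textopenbullet}\cdots\text{\textopenbullet}\mathrm{ad}(x)=\mathrm{ad}(x^{[p]_{\alpha}})\text{\textopenbullet}\alpha^{p-1}=\mathrm{ad}_{\alpha}(x^{[p]_{\alpha}}),
\]
which lies in $\mathrm{ad}_{\alpha}(L)$ by definition. Hence $L$ is restrictable; this step is a one-line unpacking of Definition 1.5 and Definition 3.1.

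For the direction $(\Leftarrow)$, I would fix a basis $(e_{j})_{j\in J}$ of $L$. Since $L$ is restrictable, for each index $j$ the composite $\mathrm{ad}(\alpha^{p-1}(e_{j}))\text{\textopenbullet}\cdots\text{\textopenbullet}\mathrm{ad}(e_{j})$ belongs to $\mathrm{ad}_{\alpha}(L)$, so I can choose $y_{j}\in L$ with
\[
\mathrm{ad}(\alpha^{p-1}(e_{j}))\text{\textopenbullet}\mathrm{ad}(\alpha^{p-2}(e_{j}))\text{\textopenbullet}\cdots\text{\textopenbullet}\mathrm{ad}(e_{j})=\mathrm{ad}_{\alpha}(y_{j}).
\]
This is precisely the hypothesis of Theorem 2.3, so applying that theorem yields a (unique) $p$-structure $[p]_{\alpha}$ on $L$ with $e_{j}^{[p]_{\alpha}}=y_{j}$, turning $L$ into a restricted Hom-Lie algebra.

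The main subtlety is that the $y_{j}$ above are only determined modulo the kernel of $\mathrm{ad}_{\alpha}$, i.e.\ modulo the $\alpha^{p-1}$-center $C(L)$, so a priori there is an ambiguity to be managed before extending from a basis to all of $L$. This is exactly what Theorem 2.3 handles: it produces the required extension by building a $p$-semilinear correction into the centralizer inside $U_{HLie}(L)^{-}$ and invoking Lemma 2.1. Thus the content of the theorem is concentrated in citing Theorem 2.3 correctly, and no extra verification of axioms (R1)--(R3) is needed here. The one place where I should be careful is to note that any choice of the $y_{j}$'s works (different choices just give different $p$-structures, consistent with Corollary 2.2), so that restrictability of $L$ really does guarantee existence, completing the equivalence.
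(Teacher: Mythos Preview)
Your argument is correct and essentially identical to the paper's proof: both directions are handled exactly as you describe, with the ``restricted $\Rightarrow$ restrictable'' implication coming straight from axiom (R1) and the converse obtained by choosing a basis, picking $y_j$ with $\mathrm{ad}(\alpha^{p-1}(e_j))\text{\textopenbullet}\cdots\text{\textopenbullet}\mathrm{ad}(e_j)=\mathrm{ad}_\alpha(y_j)$, and invoking Theorem~2.3. The only slip is in your labeling: since the statement reads ``restrictable if and only if there exists a $p$-structure'', the direction you call $(\Rightarrow)$ is actually $(\Leftarrow)$ and vice versa; the paper uses the standard convention.
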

\begin{proof}
$(\Leftarrow)$ Suppose that $L$ is restricted. For any $x\in L$,
we have
\begin{align*}
 & \mathrm{ad}(\alpha^{p-1}(x))\text{\textopenbullet}\mathrm{ad}(\alpha^{p-2}(x))\text{\textopenbullet}\cdot\cdot\cdot\text{\textopenbullet}\mathrm{ad}(x)\\
= & \mathrm{ad}(x^{[p]_{\alpha}})\text{\textopenbullet}\alpha^{p-1}\\
= & \mathrm{ad}_{\alpha}(x^{[p]_{\alpha}})\in\mathrm{ad_{\alpha}}L,
\end{align*}

which proves that $L$ is restrictable.

$(\Rightarrow)$ Let $L$ be restrictable. Then for $x\in L$, we
obtain
\[
\mathrm{ad}(\alpha^{p-1}(x))\text{\textopenbullet}\mathrm{ad}(\alpha^{p-2}(x))\text{\textopenbullet}\cdot\cdot\cdot\text{\textopenbullet}\mathrm{ad}(x)\in\mathrm{ad}_{\alpha}(L),
\]

that is, there exists $y\in L$ such that
\[
\mathrm{ad}(\alpha^{p-1}(x))\text{\textopenbullet}\mathrm{ad}(\alpha^{p-2}(x))\text{\textopenbullet}\cdot\cdot\cdot\text{\textopenbullet}\mathrm{ad}(x)=\mathrm{ad}_{\alpha}(y).
\]

Let $(e_{j})_{j\in J}$ be a basis of $L$. Then there exist $y_{j}\in L$
such that
\[
\mathrm{ad}(\alpha^{p-1}(e_{j}))\text{\textopenbullet}\mathrm{ad}(\alpha^{p-2}(e_{j}))\text{\textopenbullet}\cdot\cdot\cdot\text{\textopenbullet}\mathrm{ad}(e_{j})=\mathrm{ad}_{\alpha}(y_{j}),\forall j\in J.
\]

By virtue of Theorem 2.3, there exists exactly one $p$-structure
$[p]_{\alpha}:L\rightarrow L$ such that $e_{j}^{[p]_{\alpha}}=y_{j},\forall j\in J$,
which makes $L$ a restricted Hom-Lie algebra.
\end{proof}
Clearly, the notion of restrictable Hom-Lie algebras is more tractable
than that of restricted Hom-Lie algebras. Moreover, it is required
only to know whether a $p$-structure can be defined on a given Hom-Lie
algebra in many cases. Therefore, we only need to determine whether
a given Hom-Lie algebra is restrictable instead of defining a $p$-structure
on it.

One advantage in considering restrictable Hom-Lie algebras instead
of restricted ones reflects in the following theorem:

\begin{theorem} Let $f:(L,[\cdot,\cdot]_{L},\alpha)\rightarrow(L_{1},[\cdot,\cdot]_{L_{1}},\alpha_{1})$
be a surjective morphism of Hom-Lie algebras. If $L$ is restrictable,
so is $L_{1}$.

\end{theorem}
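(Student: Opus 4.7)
The plan is to transport the restrictability condition from $L$ to $L_1$ along the surjection $f$, using only the two morphism axioms $f([a,b]_L)=[f(a),f(b)]_{L_1}$ and $f\circ\alpha=\alpha_1\circ f$ (the latter giving by iteration $f\circ\alpha^k=\alpha_1^k\circ f$ for all $k\ge 0$). Before invoking Definition~3.1 for $L_1$, one should observe that $L_1$ is automatically multiplicative: for $u=f(x),\,v=f(y)\in L_1$,
\[
\alpha_1([u,v]_{L_1})=f(\alpha([x,y]_L))=f([\alpha(x),\alpha(y)]_L)=[\alpha_1(u),\alpha_1(v)]_{L_1},
\]
so $L_1$ is a legitimate candidate for a restrictable Hom-Lie algebra.

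Now fix $u\in L_1$ and, by surjectivity, pick $x\in L$ with $f(x)=u$. Since $L$ is restrictable, there exists $y\in L$ with
\[
\mathrm{ad}(\alpha^{p-1}(x))\circ\mathrm{ad}(\alpha^{p-2}(x))\circ\cdots\circ\mathrm{ad}(x)=\mathrm{ad}(y)\circ\alpha^{p-1}.
\]
Apply $f$ to both sides evaluated at an arbitrary $z\in L$. On the left, unfolding the iterated bracket and repeatedly commuting $f$ past each $\mathrm{ad}(\alpha^i(x))$ using the two morphism axioms yields
\[
\mathrm{ad}(\alpha_1^{p-1}(u))\circ\mathrm{ad}(\alpha_1^{p-2}(u))\circ\cdots\circ\mathrm{ad}(u)\,(f(z)).
\]
On the right, the same manipulation gives $[f(y),\alpha_1^{p-1}(f(z))]_{L_1}=\mathrm{ad}_{\alpha_1}(f(y))(f(z))$.

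Finally, since $f$ is surjective, as $z$ ranges over $L$ the element $f(z)$ ranges over all of $L_1$, so the operator identity
\[
\mathrm{ad}(\alpha_1^{p-1}(u))\circ\cdots\circ\mathrm{ad}(u)=\mathrm{ad}_{\alpha_1}(f(y))\in\mathrm{ad}_{\alpha_1}(L_1)
\]
holds on all of $L_1$. As $u$ was arbitrary, $L_1$ is restrictable.

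There is no real obstacle in this argument; the only step requiring care is the inductive unfolding of the nested brackets through $f$, where one must keep track of the twists $\alpha^i$ and convert them to $\alpha_1^i$ via $f\circ\alpha^i=\alpha_1^i\circ f$. The surjectivity hypothesis is used in two distinct places: to produce a preimage $x$ of $u$ (so that a witness $y$ exists in $L$) and to pass from an equality evaluated at all $f(z)$ to an equality of operators on $L_1$; dropping surjectivity would break the second of these.
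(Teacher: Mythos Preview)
Your proof is correct and follows essentially the same route as the paper's own argument: pick a preimage $x$ of $u$, use restrictability of $L$ to obtain a witness $y$, push the iterated adjoint identity through $f$ using $f\circ\alpha^i=\alpha_1^i\circ f$ and the bracket-preserving property, and invoke surjectivity to upgrade the pointwise equality on $f(z)$ to an operator identity on $L_1$. The only addition you make is the preliminary verification that $L_1$ is multiplicative, which the paper silently takes for granted but which is indeed needed for Definition~3.1 to apply.
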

\begin{proof}
Since $f$ is surjective, we have $L_{1}=f(L)$, that is, for any
$z\in L_{1}$, there exists $x\in L$ such that $f(x)=z$. For $x\in L$,
it follows from $L$ being restrictable that there is $y\in L$ such
that
\[
\mathrm{ad}(\alpha^{p-1}(x))\text{\textopenbullet}\mathrm{ad}(\alpha^{p-2}(x))\text{\textopenbullet}\cdot\cdot\cdot\text{\textopenbullet}\mathrm{ad}(x)=\mathrm{ad}_{\alpha}(y)\text{.}
\]

For $w\in L$, one gets
\begin{align*}
 & \mathrm{ad}(\alpha_{1}^{p-1}(f(x)))\text{\textopenbullet}\mathrm{ad}(\alpha_{1}^{p-2}(f(x)))\text{\textopenbullet}\cdot\cdot\cdot\text{\textopenbullet}\mathrm{ad}(f(x))(f(w))\\
= & [\alpha_{1}^{p-1}(f(x)),[\alpha_{1}^{p-2}(f(x)),\cdot\cdot\cdot,[f(x),f(w)]_{L_{1}}]_{L_{1}}]_{L_{1}}\\
= & [f(\alpha^{p-1}(x)),[f(\alpha^{p-2}(x)),\cdot\cdot\cdot,[f(x),f(w)]_{L_{1}}]_{L_{1}}]_{L_{1}}\\
= & f([\alpha^{p-1}(x),[\alpha^{p-2}(x),\cdot\cdot\cdot,[x,w]_{L}]_{L}]_{L}\text{)}\\
= & f(\mathrm{ad}(\alpha^{p-1}(x))\text{\textopenbullet}\mathrm{ad}(\alpha^{p-2}(x))\text{\textopenbullet}\cdot\cdot\cdot\text{\textopenbullet}\mathrm{ad}(x)(w\text{))}\\
= & f(\mathrm{ad}_{\alpha}(y)(w))=f([y,\alpha^{p-1}(w)]_{L})\\
= & [f(y),f(\alpha^{p-1}(w))]_{L_{1}}=[f(y),\alpha_{1}^{p-1}(f(w))]_{L_{1}}\\
= & \mathrm{ad}_{\alpha_{1}}(f(y))(f(w)),
\end{align*}
which implies that $\mathrm{ad}(\alpha_{1}^{p-1}(f(x)))\text{\textopenbullet}\mathrm{ad}(\alpha_{1}^{p-2}(f(x)))\text{\textopenbullet}\cdot\cdot\cdot\text{\textopenbullet}\mathrm{ad}(f(x))=\mathrm{ad}_{\alpha_{1}}(f(y))\in\mathrm{ad}_{\alpha_{1}}L_{1}$.
Hence $L_{1}$ is restrictable.
\end{proof}
\begin{theorem} Let $L_{1}$ and $L_{2}$ be Hom-Lie ideals of Hom-Lie
algebra $(L,[\cdot,\cdot]_{L},\alpha)$ such that $L=L_{1}\oplus L_{2}$.
Then $L$ is restrictable if and only if $L_{1},L_{2}$ are restrictable.

\end{theorem}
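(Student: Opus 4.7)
The plan is to prove both implications by direct-sum bookkeeping, relying on one preliminary observation and, for one direction, on Theorem 3.6 above. The key preliminary fact is that $[L_{1},L_{2}]_{L}=0$: since both $L_{i}$ are Hom-Lie ideals, any element of $[L_{1},L_{2}]_{L}$ lies in $L_{1}\cap L_{2}=0$. Consequently, for $x=x_{1}+x_{2}$ and $z=z_{1}+z_{2}$ with $x_{i},z_{i}\in L_{i}$, the bracket splits as $[x,z]_{L}=[x_{1},z_{1}]_{L}+[x_{2},z_{2}]_{L}$, and since each $L_{i}$ is $\alpha$-stable (because it is a Hom-Lie ideal) we also have $\alpha^{k}(x)=\alpha^{k}(x_{1})+\alpha^{k}(x_{2})$ with $\alpha^{k}(x_{i})\in L_{i}$.

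For the direction $(\Rightarrow)$, I would use the canonical projections $\pi_{i}\colon L\to L_{i}$. The bracket-splitting above shows that $\pi_{i}$ preserves the bracket, and $\alpha$-stability of $L_{i}$ shows $\pi_{i}\circ\alpha=\alpha|_{L_{i}}\circ\pi_{i}$, so each $\pi_{i}$ is a surjective morphism of Hom-Lie algebras. Theorem 3.6 then yields restrictability of $L_{1}$ and $L_{2}$ at once.

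For the direction $(\Leftarrow)$, I would fix $x=x_{1}+x_{2}\in L$ and apply restrictability of $L_{1}$ and $L_{2}$ to obtain $y_{i}\in L_{i}$ such that $\mathrm{ad}(\alpha^{p-1}(x_{i}))\text{\textopenbullet}\cdots\text{\textopenbullet}\mathrm{ad}(x_{i})=\mathrm{ad}_{\alpha}(y_{i})$ as operators on $L_{i}$. I then claim that $y:=y_{1}+y_{2}\in L$ is the required witness for $x$. The verification is a routine calculation using the splittings above: for any $z=z_{1}+z_{2}\in L$, the iterated adjoint of $x$ preserves the decomposition $L_{1}\oplus L_{2}$ and restricts on each $L_{i}$ to the iterated adjoint of $x_{i}$, whence
\[
\mathrm{ad}(\alpha^{p-1}(x))\text{\textopenbullet}\cdots\text{\textopenbullet}\mathrm{ad}(x)(z)=\mathrm{ad}_{\alpha}(y_{1})(z_{1})+\mathrm{ad}_{\alpha}(y_{2})(z_{2})=\mathrm{ad}_{\alpha}(y)(z),
\]
so $\mathrm{ad}(\alpha^{p-1}(x))\text{\textopenbullet}\cdots\text{\textopenbullet}\mathrm{ad}(x)\in\mathrm{ad}_{\alpha}(L)$, as required. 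There is no real obstacle; the only point to watch is the consistent tracking of both $\alpha$-twists in $\mathrm{ad}_{\alpha}=\mathrm{ad}(\cdot)\text{\textopenbullet}\alpha^{p-1}$ and the $\mathrm{ad}(\alpha^{k}(\cdot))$ factors through the direct-sum decomposition, which is precisely what $\alpha$-stability of each $L_{i}$ guarantees.
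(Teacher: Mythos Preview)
Your proposal is correct and follows essentially the same approach as the paper: for $(\Leftarrow)$ both you and the paper split $x=x_{1}+x_{2}$, use $[L_{1},L_{2}]_{L}=0$ to decouple the iterated adjoints, and take $y=y_{1}+y_{2}$; for $(\Rightarrow)$ both invoke Theorem~3.6, the paper via the quotient maps $L\to L/L_{2}\cong L_{1}$ and $L\to L/L_{1}\cong L_{2}$, you via the equivalent direct projections $\pi_{i}\colon L\to L_{i}$.
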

\begin{proof}
($\Leftarrow$) Assuming $L_{1},L_{2}$ are restrictable. For $x\in L$,
we may suppose that $x=x_{1}+x_{2}$, where $x_{1}\in L_{1},x_{2}\in L_{2}$.
Since $L_{1},L_{2}$ are Hom-Lie ideals, one gets $\alpha^{i}(x_{1})\in L_{1},\alpha^{j}(x_{2})\in L_{2}$,
for $i,j\in\{1,2,...,p-1\}$. As $L_{1},L_{2}$ are restrictable,
there exist $y_{1}\in L_{1},y_{2}\in L_{2}$ such that $\mathrm{ad}(\alpha^{p-1}(x_{1}))\text{\textopenbullet}\mathrm{ad}(\alpha^{p-2}(x_{1}))\text{\textopenbullet}\cdot\cdot\cdot\text{\textopenbullet}\mathrm{ad}(x_{1})=\mathrm{ad}_{\alpha}(y_{1})$
and $\mathrm{ad}(\alpha^{p-1}(x_{2}))\text{\textopenbullet}\mathrm{ad}(\alpha^{p-2}(x_{2}))\text{\textopenbullet}\cdot\cdot\cdot\text{\textopenbullet}\mathrm{ad}(x_{2})=\mathrm{ad}_{\alpha}(y_{2})$
respectively. Therefore,
\begin{align*}
 & \mathrm{ad}(\alpha^{p-1}(x))\text{\textopenbullet}\mathrm{ad}(\alpha^{p-2}(x))\text{\textopenbullet}\cdot\cdot\cdot\text{\textopenbullet}\mathrm{ad}(x)\\
= & \mathrm{ad}(\alpha^{p-1}(x_{1}+x_{2}))\text{\textopenbullet}\mathrm{ad}(\alpha^{p-2}(x_{1}+x_{2}))\text{\textopenbullet}\cdot\cdot\cdot\text{\textopenbullet}\mathrm{ad}(x_{1}+x_{2})\\
= & \mathrm{ad}(\alpha^{p-1}(x_{1}))\text{\textopenbullet}\mathrm{ad}(\alpha^{p-2}(x_{1}))\text{\textopenbullet}\cdot\cdot\cdot\text{\textopenbullet}\mathrm{ad}(x_{1})\\
 & \mathrm{+ad}(\alpha^{p-1}(x_{2}))\text{\textopenbullet}\mathrm{ad}(\alpha^{p-2}(x_{2}))\text{\textopenbullet}\cdot\cdot\cdot\text{\textopenbullet}\mathrm{ad}(x_{2})\\
= & \mathrm{ad}_{\alpha}(y_{1})+\mathrm{ad}_{\alpha}(y_{2})\\
= & \mathrm{ad}_{\alpha}(y_{1}+y_{2})\in\mathrm{ad}_{\alpha}L.
\end{align*}

Hence $L$ is restrictable.

($\Rightarrow$) Suppose that $L$ is restrictable. Theorem 3.6 applies
and $L_{1}\cong L/L_{2},L_{2}\cong L/L_{1}$ are restrictable.
\end{proof}
\begin{corollary}Let $L_{1},L_{2}$ be restrictable Hom-Lie ideals
of Hom-Lie algebra $(L,[\cdot,\cdot]_{L},\alpha)$ such that $L=L_{1}+L_{2}$
and $[L_{1},L_{2}]=\{0\}$. Then $L$ is restrictable.

\end{corollary}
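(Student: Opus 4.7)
The plan is to reduce this corollary to Theorem 3.6 (surjective morphic images of restrictable Hom-Lie algebras are restrictable) by first forming the external direct sum $L_1 \oplus L_2$ and then collapsing it onto $L$ via the sum map. The hypothesis $[L_1,L_2]=0$ will enter precisely to make this collapsing map a morphism of Hom-Lie algebras.

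First, I would equip the external direct sum $\widetilde{L} := L_1 \oplus L_2$ with the Hom-Lie structure described in Proposition 2.4, using the restrictions $\alpha|_{L_1}$ and $\alpha|_{L_2}$ as twists (well-defined since $L_1,L_2$ are $\alpha$-stable ideals) and the componentwise bracket $[(x_1,x_2),(y_1,y_2)]_{\widetilde{L}} = [x_1,y_1]_L + [x_2,y_2]_L$. By Theorem 3.5, the restrictability of $L_1$ and $L_2$ provides $p$-structures that make each a restricted Hom-Lie algebra; Proposition 2.4 then furnishes a $p$-structure on $\widetilde{L}$, and invoking Theorem 3.5 in the reverse direction shows that $\widetilde{L}$ is restrictable.

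Next, I would define the sum map $\phi:\widetilde{L}\to L$ by $\phi(x_1,x_2)=x_1+x_2$. Surjectivity is immediate from $L=L_1+L_2$, and twist compatibility $\phi\circ\widetilde{\alpha}=\alpha\circ\phi$ is clear from the componentwise definition of $\widetilde{\alpha}$. The one real verification is bracket compatibility:
\[
[\phi(x_1,x_2),\phi(y_1,y_2)]_L = [x_1+x_2,\, y_1+y_2]_L = [x_1,y_1]_L + [x_2,y_2]_L,
\]
where the cross terms $[x_1,y_2]_L$ and $[x_2,y_1]_L$ vanish by the hypothesis $[L_1,L_2]=\{0\}$; this matches $\phi([(x_1,x_2),(y_1,y_2)]_{\widetilde{L}})$. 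Hence $\phi$ is a surjective morphism of Hom-Lie algebras, and Theorem 3.6 yields that $L=\phi(\widetilde{L})$ is restrictable.

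The only point requiring attention is this bracket check: without $[L_1,L_2]=\{0\}$ the sum in $L$ would produce cross terms absent from $\widetilde{L}$ and $\phi$ would fail to be a morphism. With that hypothesis in force, the argument is a direct application of the earlier results, so there is no genuine obstacle beyond this routine verification.
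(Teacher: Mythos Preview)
Your proof is correct and follows essentially the same route as the paper: form the external direct sum, show it is restrictable, then push forward along the surjective sum map via Theorem 3.6, using $[L_1,L_2]=\{0\}$ to verify that the sum map is a Hom-Lie morphism. The only cosmetic difference is that the paper obtains restrictability of $L_1\oplus L_2$ by invoking Theorem 3.7 directly, whereas you pass through Proposition 2.4 and Theorem 3.5; either way the conclusion and the remaining steps are identical.
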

\begin{proof}
By virtue of Theorem 3.7, since $L_{1},L_{2}$ are restrictable, one
gets $L_{1}\oplus L_{2}$ is restrictable. We define a mapping $f:L_{1}\oplus L_{2}\rightarrow L,(x,y)\mapsto x+y$.
Obviously, $f$ is surjective. Furthermore, be analogous to Corollary
3.7 in \cite{GC}, it can be proved that $f$ is a morphism of Hom-Lie
algebras. Theorem 3.6 applies and $L$ is restrictable.
\end{proof}
\begin{corollary}Suppose that $(L,[.,.]_{L},\alpha)$ is a regular
Hom-Lie algebra with $L=[L,L]_{L}+C(L)$. Let $I\subset C(L)$ be
a Hom-Lie ideal of $L$. Then $L$ is restrictable if and only if
$L/I$ is restrictable.

\end{corollary}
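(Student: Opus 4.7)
The direction $(\Rightarrow)$ is immediate from Theorem 3.6: the canonical projection $\pi\colon L\to L/I$ is a surjective morphism of multiplicative Hom-Lie algebras, so if $L$ is restrictable, then $L/I=\pi(L)$ is too.

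The interesting implication is $(\Leftarrow)$. Given $x\in L$, I decompose $x=x_1+c$ with $x_1\in[L,L]_L$ and $c\in C(L)$, using the hypothesis $L=[L,L]_L+C(L)$. Regularity of $\alpha$ forces $C(L)$ to act as a strict centre: from $[c,\alpha^{p-1}(z)]_L=0$ and bijectivity of $\alpha^{p-1}$ I get $[c,w]_L=0$ for every $w\in L$; moreover multiplicativity of $\alpha$ gives $\alpha^{i}(c)\in C(L)$ for every $i\ge 0$. Consequently $\mathrm{ad}(\alpha^{i}(x))=\mathrm{ad}(\alpha^{i}(x_1))$ and
\[
T:=\mathrm{ad}(\alpha^{p-1}(x))\circ\cdots\circ\mathrm{ad}(x)=\mathrm{ad}(\alpha^{p-1}(x_1))\circ\cdots\circ\mathrm{ad}(x_1),
\]
so it suffices to produce $y\in L$ with $T=\mathrm{ad}_{\alpha}(y)$.

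Since $L/I$ is restrictable, Theorem 3.5 supplies a $p$-structure $[p]_{\bar{\alpha}}$ on $L/I$; pick any lift $y\in L$ of $\overline{x_1}^{[p]_{\bar{\alpha}}}$. By construction, the linear map $\phi\colon L\to L$ defined by $\phi(v):=T(v)-[y,\alpha^{p-1}(v)]_{L}$ takes values in $I\subset C(L)$. The plan is then to show $\phi\equiv 0$ by checking that it vanishes on each of the two summands of $L=[L,L]_L+C(L)$. On $C(L)$ this is immediate: for $c'\in C(L)$, every inner bracket in $T(c')$ reduces via $[x_1,c']_L=0$ to zero, and $\alpha^{p-1}(c')\in C(L)$ gives $[y,\alpha^{p-1}(c')]_L=0$.

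On $[L,L]_L$ I would argue by the Hom-analogue of the classical fact that $(\mathrm{ad}\,a)^p-\mathrm{ad}\,y$ is a derivation with central image, hence vanishes on $[L,L]$. Concretely, for $v=[v_1,v_2]_L$, I would expand both $T(v)$ and $[y,\alpha^{p-1}(v)]_L$ by repeated application of the Hom-Jacobi identity together with multiplicativity of $\alpha$; because every ``error'' contribution in the expansion picks up a factor that already lies in $C(L)$, such a term brackets trivially with everything else, and the two expansions collapse to the same sum, forcing $\phi(v)=0$. The main obstacle is precisely this last step: one must pair the $\alpha$-twists in the nested brackets carefully enough to recognise every error term as a bracket with a central element. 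Once $\phi\equiv 0$ is established, $L=[L,L]_L+C(L)$ yields $T=\mathrm{ad}_{\alpha}(y)$ on all of $L$, and hence $L$ is restrictable.
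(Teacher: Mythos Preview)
Your outline is correct and tracks the paper's argument closely, but you work harder than necessary in the backward direction. The paper does not fix an arbitrary $x\in L$ and decompose it; instead it argues structurally: first it shows that the Hom-Lie ideal $[L,L]_L$ is itself restrictable, then notes that $C(L)$ is trivially restrictable (it is abelian), and finally invokes Corollary~3.8 (using $L=[L,L]_L+C(L)$ together with $[[L,L]_L,C(L)]_L=\{0\}$) to conclude that $L$ is restrictable. Your decomposition $x=x_1+c$ and the reduction $T_x=T_{x_1}$ amount to re-proving Corollary~3.8 by hand for this particular situation.

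As for your ``main obstacle''---showing that $\phi=T-\mathrm{ad}_\alpha(y)$ vanishes on $[L,L]_L$---the paper dispatches it in one line: from $\phi(L)\subset I\subset C(L)$ it passes directly to
\[
\phi([L,L]_L)\subset [I,L]_L\subset [C(L),L]_L=\{0\},
\]
using that regularity of $\alpha$ makes $C(L)$ the genuine centre. That containment is precisely the $\alpha$-twisted Leibniz property of $\phi$ (both $T$ and $\mathrm{ad}_\alpha(y)$ act on brackets so that the ``error terms'' are brackets with a factor in $\phi(L)\subset I$), which is exactly what your proposed Hom-Jacobi expansion would eventually establish term by term. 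So the step you flag as the hard one is the same step the paper records tersely; recognising it as $\phi([L,L]_L)\subset[I,L]_L$ rather than chasing individual $\alpha$-twists is the cleaner route, and packaging the conclusion via Corollary~3.8 avoids the separate (albeit trivial) check on $C(L)$ that your direct argument requires.
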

\begin{proof}
($\Rightarrow$) Assume that $L$ is restrictable. Then Theorem 3.6
applies and $L/I$ is restrictable.

($\Leftarrow$) If $L/I$ is restrictable, then for any $x\in[L,L]_{L}$,
there exists $y\in[L,L]_{L}$ such that
\[
\mathrm{(ad}(\alpha^{p-1}(x))\circ\mathrm{ad}(\alpha^{p-2}(x))\circ\cdot\cdot\cdot\circ\mathrm{ad}(x)-\mathrm{ad}_{\alpha}y)(L)\subset I\subset C(L).
\]

Therefore,
\[
\mathrm{(ad}(\alpha^{p-1}(x))\circ\mathrm{ad}(\alpha^{p-2}(x))\circ\cdot\cdot\cdot\circ\mathrm{ad}(x)-\mathrm{ad}_{\alpha}y)([L,L]_{L})\subset[I,L]_{L}\subset[C(L),L]_{L}=\{0\text{\}}.
\]

It implies that
\[
\mathrm{ad}(\alpha^{p-1}(x))\circ\mathrm{ad}(\alpha^{p-2}(x))\circ\cdot\cdot\cdot\circ\mathrm{ad}(x)=\mathrm{ad}_{\alpha}y\in\mathrm{ad}_{\alpha}([L,L]_{L}).
\]

Hence $[L,L]_{L}$ is restrictable. Moreover, $C(L)$ is clearly restrictable
by definition. Together with $[[L,L]_{L},C(L)]_{L}=\{0\}$, we conclude
by Corollary 3.8 that $L$ is restrictable.
\end{proof}
\begin{proposition}Let $(L,[.,.]_{L},\alpha)$ be a restrictable
Hom-Lie algebra and $H$ a Hom-Lie subalgebra of $L$. Then $H$ is
a $p$-subalgebra for some $p$-structure $[p]_{\alpha}$ on $L$
if and only if $(\mathrm{ad}H|_{L})^{p}\subset\mathrm{ad}_{\alpha}H|_{L}$.

\end{proposition}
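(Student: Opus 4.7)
My plan is to handle the two directions separately. The forward direction should fall straight out of axiom (R1): if $H$ is a $p$-subalgebra for some $p$-structure $[p]_{\alpha}$ on $L$, then for each $h\in H$ one has $h^{[p]_{\alpha}}\in H$ and
\[
\mathrm{ad}(\alpha^{p-1}(h))\circ\mathrm{ad}(\alpha^{p-2}(h))\circ\cdots\circ\mathrm{ad}(h)=\mathrm{ad}(h^{[p]_{\alpha}})\circ\alpha^{p-1}=\mathrm{ad}_{\alpha}(h^{[p]_{\alpha}})\in\mathrm{ad}_{\alpha}H|_{L},
\]
which is the required containment.

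For the converse, the plan is to construct a $p$-structure on $L$ whose values on a basis of $H$ already lie in $H$, and then show that closure of $H$ under $[p]_{\alpha}$ is inherited from this choice via axioms (R2)--(R3). Concretely, I would fix a basis $(e_{j})_{j\in J_{H}}$ of $H$ and extend it to a basis $(e_{j})_{j\in J}$ of $L$; for each $j\in J_{H}$ the hypothesis $(\mathrm{ad}H|_{L})^{p}\subset\mathrm{ad}_{\alpha}H|_{L}$ supplies an $h_{j}\in H$ with
\[
\mathrm{ad}(\alpha^{p-1}(e_{j}))\circ\mathrm{ad}(\alpha^{p-2}(e_{j}))\circ\cdots\circ\mathrm{ad}(e_{j})=\mathrm{ad}_{\alpha}(h_{j}),
\]
while for $j\in J\setminus J_{H}$ restrictability of $L$ supplies some $y_{j}\in L$ with the analogous identity. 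Theorem~2.3 then produces a unique $p$-structure $[p]_{\alpha}$ on $L$ with $e_{j}^{[p]_{\alpha}}=h_{j}$ for $j\in J_{H}$ and $e_{j}^{[p]_{\alpha}}=y_{j}$ otherwise.

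It remains to verify $h^{[p]_{\alpha}}\in H$ for \emph{every} $h\in H$, not merely the basis elements, and this is where the only genuine work lies. I would induct on the number of nonzero coefficients in an expansion $h=\sum_{j\in J_{H}}c_{j}e_{j}$, invoking (R2) for the scalar multiples and (R3) to peel off one summand at a time. The non-routine point is stability of $H$ under the cross terms $s_{i}(x,y)$: reading these off as coefficients in the polynomial identity
\[
\mathrm{ad}(\alpha^{p-2}(kx+y))\circ\cdots\circ\mathrm{ad}(kx+y)(x)=\sum_{i=1}^{p-1}i\,s_{i}(x,y)\,k^{i-1},
\]
one observes that when $x,y\in H$ the left-hand side is an iterated bracket of elements of $H$ with $\alpha$ applied, hence lies in $H$ for every $k\in\mathbb{F}$; each coefficient $i\,s_{i}(x,y)$ therefore lies in $H$, and dividing by the units $1,\ldots,p-1\in\mathbb{F}$ gives $s_{i}(x,y)\in H$. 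Combined with $e_{j}^{[p]_{\alpha}}=h_{j}\in H$, this closes the induction. The main obstacle I foresee is precisely this stability argument for the $s_{i}$; everything else is straight assembly from Theorem~2.3 and the hypotheses.
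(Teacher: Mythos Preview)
Your forward direction is exactly the paper's argument. For the converse, your proof is correct and in fact considerably more explicit than the paper's. The paper simply observes that the hypothesis $(\mathrm{ad}H|_{L})^{p}\subset\mathrm{ad}_{\alpha}H|_{L}$ makes $H$ restrictable, invokes Theorem~3.5 to conclude that $H$ is restricted, and then declares that $H$ is a $p$-subalgebra of $L$; it does not spell out how one obtains a $p$-structure \emph{on $L$} for which $H$ is closed. Your basis-extension argument via Theorem~2.3 is precisely the mechanism that makes this step rigorous: by prescribing $e_{j}^{[p]_{\alpha}}\in H$ for the basis vectors $e_{j}$ of $H$ and invoking restrictability of $L$ for the remaining basis vectors, you manufacture a single $p$-structure on all of $L$ with the desired property.

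Your anticipated obstacle---stability of the $s_{i}(x,y)$ under $H$---is genuine but your resolution is sound. A slightly cleaner way to phrase it, avoiding any appeal to evaluation at many $k$: expand each factor $\mathrm{ad}(\alpha^{j}(kx+y))=k\,\mathrm{ad}(\alpha^{j}(x))+\mathrm{ad}(\alpha^{j}(y))$ multilinearly; the coefficient of $k^{i-1}$ on the left is then visibly a sum of iterated brackets of $\alpha$-images of $x$ and $y$, hence lies in $H$ because $H$ is a Hom-Lie subalgebra. This gives $i\,s_{i}(x,y)\in H$ directly, and inverting $i\in\{1,\dots,p-1\}$ finishes. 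In short, your proof is correct and supplies the detail the paper's brief argument leaves implicit.
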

\begin{proof}
($\Rightarrow$) Assume that $H$ is a $p$-subalgebra of $L$, then
for any $x\in H$, one gets $x^{[p]_{\alpha}}\in H$. Therefore, $\mathrm{ad}(\alpha^{p-1}(x))\text{\textopenbullet}\mathrm{ad}(\alpha^{p-2}(x))\text{\textopenbullet}\cdot\cdot\cdot\text{\textopenbullet}\mathrm{ad}(x)=\mathrm{ad}_{\alpha}(x^{[p]_{\alpha}})\in\mathrm{ad}_{\alpha}H|_{L}$.
It implies that $(\mathrm{ad}H|_{L})^{p}\subset\mathrm{ad}_{\alpha}H|_{L}$.

($\Leftarrow$) If $(\mathrm{ad}H|_{L})^{p}\subset\mathrm{ad}_{\alpha}H|_{L}$,
then $H$ is restrictable. Theorem 3.5 applies and $H$ is restricted.
Thereby, $H$ is a $p$-subalgebra of $L$.
\end{proof}
At the end of this section, the influence of associative bilinear
forms will be considered. To this end, the following definitions are
introduced first.

\begin{definition}Let $(L,[.,.]_{L},\alpha)$ be a restricted Hom-Lie
algebra in characteristic $p$. A symmetric bilinear form $\lambda:L\times L\rightarrow\mathbb{F}$
is called associative, if
\[
\lambda(x,[y,z])=\lambda([x,\alpha^{p-1}(y)],z),\forall x,y,z\in L.
\]

\end{definition}

\begin{definition} (see \cite{GC}) Let $(L,[.,.]_{L},\alpha)$ be
a Hom-Lie algebra and $\lambda$ a symmetric bilinear form on $L$.
Set $L^{\perp}=\{x\in L|\lambda(x,y)=0,\forall y\in L\}$. $L$ is
called nondegenerate if $L^{\perp}=0$.

\end{definition}

\begin{theorem} Let $(g,[.,.]_{g},\alpha,[p]_{\alpha})$ be a restricted
Hom-Lie algebra and $L$ a Hom-Lie subalgebra of $g$ with $C(L)=\{0\}$.
Assume $\lambda:g\times g\rightarrow\mathbb{F}$ to be an associative
symmetric bilinear form, which is nondegenerate on $L\times L$. Then
$L$ is restrictable.

\end{theorem}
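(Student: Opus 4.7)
\emph{Proof plan.} The plan is to transplant the $p$-structure on the ambient restricted Hom-Lie algebra $g$ into $L$ by using the associative form $\lambda$ as a projection device. Fix $x\in L$. Axiom (R1) applied in $g$ gives
\[
\mathrm{ad}(\alpha^{p-1}(x))\text{\textopenbullet}\mathrm{ad}(\alpha^{p-2}(x))\text{\textopenbullet}\cdot\cdot\cdot\text{\textopenbullet}\mathrm{ad}(x)=\mathrm{ad}(x^{[p]_{\alpha}})\text{\textopenbullet}\alpha^{p-1},
\]
valid as endomorphisms of $g$. Even though $x^{[p]_{\alpha}}$ a priori lies only in $g$, the left-hand side sends $L$ into $L$ because $L$ is closed under $\alpha$ and the bracket; consequently $[x^{[p]_{\alpha}},\alpha^{p-1}(z)]_{g}\in L$ for every $z\in L$. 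This integrality observation is the linchpin of what follows.

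Next I would manufacture the desired element $y\in L$ by duality. The map $u\mapsto\lambda(x^{[p]_{\alpha}},u)$ is a linear functional on $L$; since $\lambda|_{L\times L}$ is nondegenerate, there is a unique $y\in L$ with $\lambda(y,u)=\lambda(x^{[p]_{\alpha}},u)$ for all $u\in L$. To show that $\mathrm{ad}_{\alpha}(y)$ coincides with $\mathrm{ad}(x^{[p]_{\alpha}})\text{\textopenbullet}\alpha^{p-1}$ on $L$, I would fix $z,w\in L$ and use associativity of $\lambda$ together with $[z,w]\in L$ to obtain
\[
\lambda\bigl([x^{[p]_{\alpha}}-y,\alpha^{p-1}(z)]_{g},w\bigr)=\lambda\bigl(x^{[p]_{\alpha}}-y,[z,w]\bigr)=0.
\]
By the integrality observation, the bracket $[x^{[p]_{\alpha}}-y,\alpha^{p-1}(z)]_{g}$ sits inside $L$, so nondegeneracy of $\lambda|_{L\times L}$ forces it to vanish. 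Hence $\mathrm{ad}(x^{[p]_{\alpha}})\text{\textopenbullet}\alpha^{p-1}=\mathrm{ad}_{\alpha}(y)$ on $L$, and combining with (R1) in $g$ yields $\mathrm{ad}(\alpha^{p-1}(x))\text{\textopenbullet}\cdot\cdot\cdot\text{\textopenbullet}\mathrm{ad}(x)=\mathrm{ad}_{\alpha}(y)\in\mathrm{ad}_{\alpha}(L)$, which is exactly Definition 3.1.

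The main obstacle is the integrality step: one must argue that the error term $[x^{[p]_{\alpha}}-y,\alpha^{p-1}(z)]_{g}$ lies in $L$ before nondegeneracy of $\lambda|_{L\times L}$ can be used to annihilate it, and this hinges on the subalgebra property of $L$ together with the identity supplied by (R1) in $g$. The hypothesis $C(L)=\{0\}$ plays the auxiliary role of making the representing $y$ unique (cf.\ Corollary 2.2) and, via Theorem 3.5, of promoting the restrictability proved here to an actual $p$-structure on $L$.
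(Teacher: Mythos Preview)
Your argument is correct and follows essentially the same route as the paper: produce $y\in L$ by nondegeneracy of $\lambda|_{L\times L}$, use associativity of $\lambda$ to obtain $\lambda([x^{[p]_{\alpha}}-y,\alpha^{p-1}(z)],w)=0$, and then (after the integrality observation, which the paper leaves implicit) conclude that the bracket vanishes. The one difference is that the paper actually invokes $C(L)=\{0\}$ to deduce the stronger statement $x^{[p]_{\alpha}}=y\in L$, whereas your version stops at the operator identity $\mathrm{ad}_{\alpha}(x^{[p]_{\alpha}})|_{L}=\mathrm{ad}_{\alpha}(y)|_{L}$; consequently your closing commentary on the role of $C(L)=\{0\}$ is slightly off, since your main argument never uses it (uniqueness of $y$ comes from nondegeneracy alone).
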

\begin{proof}
Since $\lambda$ is nondegenerate on $L\times L$, every linear form
$f$ on $L$ is determined by a suitably chosen element $y\in L:f(z)=\lambda(y,z),\forall z\in L$.
Then there exists $y\in L$ such that
\[
\lambda(x^{[p]_{\alpha}},z)=\lambda(y,z),\forall z\in L,
\]
which implies that $0=\lambda(x^{[p]_{\alpha}}-y,[L,L])=\lambda([x^{[p]_{\alpha}}-y,\alpha^{p-1}(L)],L)$.
Thereby, $[x^{[p]_{\alpha}}-y,\alpha^{p-1}(L)]=0$. It follows from
$C(L)=\{0\}$ that $x^{[p]_{\alpha}}-y=0$ and $x^{[p]_{\alpha}}=y$.
Hence
\begin{align*}
 & \mathrm{ad}(\alpha^{p-1}(x))\text{\textopenbullet}\mathrm{ad}(\alpha^{p-2}(x))\text{\textopenbullet}\cdot\cdot\cdot\text{\textopenbullet}\mathrm{ad}(x)\\
= & \mathrm{ad}_{\alpha}(x^{[p]_{\alpha}})=\mathrm{ad}_{\alpha}(y)\in\mathrm{ad}_{\alpha}L,
\end{align*}
which proves that $L$ is restrictable.
\end{proof}

\section{Restricted Envelopes}

In this section, the universal enveloping algebras and $p$-envelopes
of a restricted Hom-Lie algebra will be taken into consideration.
Like the case of restricted Lie algebras, the additional structure
of restrictedness should be considered.

\begin{definition}Let $(L,[.,.]_{L},\alpha,[p]_{\alpha})$ denote
a restricted Hom-Lie algebra. Let $u_{HLie}(L)$ be a Hom-associative
algebra with unity and $i:L\rightarrow u_{HLie}(L)^{-}$ a restricted
morphism. The pair $(u_{HLie}(L),i)$ is called a restricted universal
enveloping algebra if for every Hom-associative algebra $(A,\mu_{A},\alpha_{A})$
and every restricted morphism $f:L\rightarrow HLie(A)$ of Hom-Lie
algebras, there exists a unique morphism $h:u_{HLie}(L)\rightarrow A$
of Hom-associative algebras such that $h\circ i=f$.

\end{definition}

\begin{definition}Let $(L,[.,.]_{L},\alpha_{L})$ be any Hom-Lie
algebra.

(1) The 5-tuple $(G,[.,.]_{G},\alpha_{G},[p],i)$ consisting of a
restricted Hom-Lie algebra $(G,[.,.]_{G},$ $\alpha_{G},[p])$ and
a morphism of Hom-Lie algebras $i:L\rightarrow G$ is called a $p$-envelope
of $L$ if $i$ is injective and $i(L)_{p}=G$.

(2) A $p$-envelope $(G,[.,.]_{G},\alpha_{G},[p],i)$ is said to be
universal if it satisfies the following universal property: For every
restricted Hom-Lie algebra $(H,[.,.]_{H},\alpha_{H},[p]')$ and every
morphism $f:L\rightarrow H$, there exists exactly one restricted
morphism of Hom-Lie algebras $g:(G,[.,.]_{G},\alpha_{G},[p])\rightarrow(H,[.,.]_{H},\alpha_{H},[p]')$
such that $g\circ i=f$.

\end{definition}

The existence of universal $p$-envelopes is ensured by the following
theorem:

\begin{theorem} Every Hom-Lie algebra $(L,[.,.]_{L},\alpha_{L})$
has a universal $p$-envelope $\hat{L}$. Moreover, if $(L,[.,.]_{L},\alpha_{L})$
is involutive, then $(\hat{L},[.,.]_{\hat{L}},\alpha_{\hat{L}})$
is also an involutive Hom-Lie algebra.

\end{theorem}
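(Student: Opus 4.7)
The plan is to adapt the classical construction of restricted envelopes via Yau's universal enveloping Hom-associative algebra $U_{HLie}(L)$. Write $\alpha_U$ for its twisting map and $i : L \hookrightarrow U_{HLie}(L)^-$ for the canonical embedding, whose injectivity is the Hom-Lie PBW theorem. First I would equip $U_{HLie}(L)^-$ with the candidate $p$-structure $x \mapsto \alpha_U^{p-1}(x^p)$ and check axioms (R1)--(R3) of Definition 1.5 by transporting the classical identity $[x^p, y] = \mathrm{ad}(x)^p(y)$ and Jacobson's formula for $(x+y)^p$ through $\alpha_U^{p-1}$, using the multiplicativity of $\alpha_U$. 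Then I define $\hat{L}$ to be the $p$-subalgebra of $U_{HLie}(L)^-$ generated by $i(L)$, equipped with the induced bracket, twist, and $p$-structure. Since $i(L)_p = \hat{L}$ by construction and $i$ is injective, $(\hat{L}, i)$ is a $p$-envelope of $L$.

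For the universal property, let $(H, [\cdot, \cdot]_H, \alpha_H, [p]')$ be a restricted Hom-Lie algebra and $f : L \to H$ any Hom-Lie morphism. Filter $\hat{L} = \bigcup_{k \geq 0} F_k$ where $F_0 = i(L)$ and $F_{k+1}$ is the Hom-Lie subalgebra of $U_{HLie}(L)^-$ generated by $F_k \cup \{y^{[p]_\alpha} : y \in F_k\}$. I would build $g : \hat{L} \to H$ inductively: set $g \circ i = f$ on $F_0$, declare $g(y^{[p]_\alpha}) := g(y)^{[p]'}$ for $y \in F_k$ at stage $k+1$, and extend linearly and across brackets. The main obstacle is well-definedness, namely showing that any linear relation among these generators in $F_{k+1} \subset U_{HLie}(L)^-$ forces the corresponding relation in $H$. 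This is controlled by axioms (R2) and (R3): since the $s_i(x, y)$ appearing in (R3) are universal Lie words in $x$ and $y$, any identity between $p$-powers reduces to Lie relations on $F_k$, which $g$ already respects by the induction hypothesis. Uniqueness of $g$ is immediate since $i(L)$ generates $\hat{L}$ as a $p$-subalgebra.

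For the involutive assertion, assume $\alpha_L^2 = \mathrm{id}$. By functoriality of Yau's construction, $\alpha_U$ is a Hom-associative endomorphism of $U_{HLie}(L)$ extending $\alpha_L$; its square $\alpha_U^2$ is another such endomorphism extending $\alpha_L^2 = \mathrm{id}$, so uniqueness gives $\alpha_U^2 = \mathrm{id}$ on $U_{HLie}(L)$. Since $\hat{L}$ is stable under $\alpha_U$, the restriction $\alpha_{\hat{L}}$ satisfies $\alpha_{\hat{L}}^2 = \mathrm{id}$, and its multiplicativity with respect to $[\cdot, \cdot]_{\hat{L}}$ is inherited from the multiplicativity of $\alpha_U$ on the Hom-associative product. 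Hence $(\hat{L}, [\cdot, \cdot]_{\hat{L}}, \alpha_{\hat{L}})$ is involutive.
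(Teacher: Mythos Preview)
Your construction of $\hat{L}$ as the $p$-subalgebra of $U_{HLie}(L)^{-}$ generated by $i(L)$, with the $p$-structure $x\mapsto \alpha_U^{p-1}(x^p)$, is exactly the paper's. Your treatment of the involutive claim is also in line with (and more detailed than) the paper's one-line ``obviously''.

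The divergence is in the universal property, and there your inductive scheme has a real gap. You propose to define $g$ on $F_{k+1}$ by declaring $g(y^{[p]_\alpha}):=g(y)^{[p]'}$ and then argue that well-definedness is ``controlled by (R2) and (R3)''. But (R2) and (R3) only tell you which relations \emph{must} hold in any restricted Hom-Lie algebra; they do not tell you that these are the \emph{only} relations holding among the $y^{[p]_\alpha}$ inside the particular algebra $U_{HLie}(L)^{-}$. To know that, you would need a PBW-type structural statement about $\hat{L}$ (e.g.\ that the iterated $p$-powers of a basis of $L$ are linearly independent modulo $L$), which you invoke only for the injectivity of $i$, not here. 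As written, nothing excludes an ``accidental'' linear relation in $F_{k+1}$ that your recipe for $g$ would not respect.

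The paper sidesteps this entirely by deferring to the classical argument in \cite{SF}: one composes $f:L\to H$ with the embedding of $H$ into its restricted enveloping algebra, extends to a Hom-associative algebra map out of $U_{HLie}(L)$ by the universal property of the latter, and then restricts to $\hat{L}$. Because an associative algebra morphism automatically sends $p$-th powers to $p$-th powers, the map is restricted by construction and no well-definedness check is needed; its image lands in $H$ because $H$ is already a $p$-subalgebra on the target side. This is both shorter and avoids the delicate bookkeeping your filtration requires. If you want to keep your direct approach, you should either prove the needed PBW-type independence statement for $\hat{L}$, or reroute through the universal property of $U_{HLie}(L)$ as above.
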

\begin{proof}
Let $\hat{L}$ be the $p$-subalgebra of $U_{HLie}(L)^{-}$ generated
by $L$ and $f:L\rightarrow H$ a morphism. Be analogous to the case
of Lie algebra(see \cite{SF}), $\hat{f}:\hat{L}\rightarrow H$ can
be proved to be the unique extension of $f$. Therefore, $\hat{L}$
is a universal $p$-envelope of $L$. Moreover, if $\alpha_{L}:L\rightarrow L$
is an involution, then $\alpha_{\hat{L}}:\hat{L}\rightarrow\hat{L}$
is obviously also an involution. It follows that $(\hat{L},[.,.]_{\hat{L}},\alpha_{\hat{L}})$
is an involutive Hom-Lie algebra.
\end{proof}
\begin{proposition} Let $(L,[.,.]_{L},\alpha)$ be an involutive
Hom-Lie algebra. Then the following statements hold:

(1) Let $(G,[.,.]_{G},\alpha_{G},[p],i)$ be a universal $p$-envelope
of $L$. If $L$ is finite-dimensional, so is $G/C(G)$.

(2) If $L$ is finite-dimensional, then $L$ possesses a finite-dimensional
$p$-envelope.

(3) Any morphism of Hom-Lie algebra $f:L\rightarrow G$ can be extended
to a restricted morphism $\hat{f}:\hat{L}\rightarrow\hat{G}$. If
$f$ is surjective, so is $\hat{f}$.

\end{proposition}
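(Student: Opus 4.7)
The plan is to dispatch (1), (2), (3) in order: (1) supplies the engine for (2), and (3) is essentially formal from the universal property established in Theorem 4.3.

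For (1), since any two universal $p$-envelopes are isomorphic, I would work with $\hat{L}\subseteq U_{HLie}(L)^{-}$ as constructed in Theorem 4.3. The central observation is that $L$ is a $p$-ideal of $\hat{L}$: it is closed under brackets with itself, and by axiom (R1) for $x\in L$, the operator $\mathrm{ad}(x^{[p]_{\alpha}})\circ\alpha^{p-1}=\mathrm{ad}(\alpha^{p-1}(x))\circ\cdots\circ\mathrm{ad}(x)$ visibly preserves $L$; an induction over the $p$-subalgebra generation process propagates this to $[\hat{L},L]_{\hat{L}}\subseteq L$. Consequently the twisted adjoint
\[
\rho:\hat{L}\longrightarrow\mathrm{gl}(L),\qquad \rho(x)(y)=[x,\alpha^{p-1}(y)]_{\hat{L}}
\]
takes values in the finite-dimensional space $\mathrm{gl}(L)$, and its kernel is the $\alpha^{p-1}$-centralizer $C_{\hat{L}}(L)$. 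Using that $\alpha$ is involutive (so $\alpha^{p-1}\in\{\mathrm{id},\alpha\}$) together with Hom-Jacobi and (R1), I would check that any $x\in C_{\hat{L}}(L)$ automatically centralizes every bracket and every $[p]_{\alpha}$-power built from $L$, so $C_{\hat{L}}(L)=C(\hat{L})$. This yields $\hat{L}/C(\hat{L})\hookrightarrow\mathrm{gl}(L)$, giving the finite-dimensionality.

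For (2), I would use (1) to manufacture a finite-dimensional $p$-subalgebra $G\subseteq\hat{L}$ containing $L$; the pair $(G,i)$ with $i:L\hookrightarrow G$ is then the required $p$-envelope. Let $\pi:\hat{L}\to\hat{L}/C(\hat{L})$ and choose a finite-dimensional $V\subseteq\hat{L}$ containing $L$ with $\pi(V)=\pi(L)_{p}$. Both $[V,V]_{\hat{L}}$ and $V^{[p]_{\alpha}}$ have $\pi$-image inside $\pi(V)$, so
\[
[V,V]_{\hat{L}}\subseteq V+C(\hat{L}),\qquad V^{[p]_{\alpha}}\subseteq V+C(\hat{L}),
\]
with the corrections spanning a finite-dimensional $T_{0}\subseteq C(\hat{L})$. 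Because $C(\hat{L})$ is abelian (its brackets are zero), and because involutivity of $\alpha$ reduces the action of $\alpha$ on $T_{0}$ to at most two iterates, the iterative closure of $T_{0}$ under $\alpha$ and $[p]_{\alpha}$ stays inside the finite-dimensional fiber $V+C(\hat{L})$ above $\pi(V)$ and terminates, producing a finite-dimensional $p$-closed $T\supseteq T_{0}$. Then $G:=V+T$ is finite-dimensional, $\alpha$-invariant, and closed under bracket and $[p]_{\alpha}$.

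Part (3) is formal from the universal property. The composite $j\circ f:L\to\hat{G}$ with the canonical embedding $j:G\hookrightarrow\hat{G}$ is a morphism of Hom-Lie algebras into the restricted Hom-Lie algebra $\hat{G}$, so Theorem 4.3 yields a unique restricted morphism $\hat{f}:\hat{L}\to\hat{G}$ extending it, hence extending $f$. If $f$ is surjective, then $\hat{f}(\hat{L})$ is a $p$-subalgebra of $\hat{G}$ containing $f(L)=G$; since $\hat{G}=G_{p}$, we conclude $\hat{f}(\hat{L})=\hat{G}$. The main obstacle is the finite-dimensional closure step in (2): brackets and the $p$-semilinear $[p]_{\alpha}$ may each introduce new central corrections, so one must argue stabilization inside the finite-dimensional fiber above $\pi(L)_{p}$; involutivity of $\alpha$ is decisive here, collapsing $\alpha^{p-1}$ to $\{\mathrm{id},\alpha\}$ and preventing an unbounded cascade of $\alpha$-iterates. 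A subsidiary point in (1), the identification $C_{\hat{L}}(L)=C(\hat{L})$, is handled by the Hom-Jacobi-plus-involution induction outlined above.
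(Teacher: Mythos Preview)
Your arguments for (1) and (3) are essentially the paper's: an adjoint-type map from $G$ into a finite-dimensional $\mathrm{gl}$ with kernel $C(G)$, and the universal property plus $\hat{G}=G_{p}$. The details differ slightly (the paper restricts to $L_{1}=\{x:\alpha(x)=x\}$ and uses $x\mapsto(\mathrm{ad}\,\alpha(x))|_{L_{1}}$), but the mechanism is the same.

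Part (2), however, has a genuine gap. You assert that the iterative closure of $T_{0}$ under $\alpha$ and $[p]_{\alpha}$ ``stays inside the finite-dimensional fiber $V+C(\hat{L})$ above $\pi(V)$ and terminates''. But $V+C(\hat{L})=\pi^{-1}(\pi(V))$ is \emph{not} finite-dimensional: $\hat{L}$ is typically infinite-dimensional and so is $C(\hat{L})$. Involutivity of $\alpha$ controls the $\alpha$-orbit, but it says nothing about the $[p]_{\alpha}$-orbit. On the abelian subalgebra $C(\hat{L})$ the map $[p]_{\alpha}$ is merely $p$-semilinear, and the $p$-closure of a finite-dimensional subspace of an infinite-dimensional space under a $p$-semilinear self-map need not be finite-dimensional (think of $x,x^{[p]},x^{[p]^{2}},\dots$). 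So your stabilization claim is unsupported, and the construction of a finite-dimensional $G\subseteq\hat{L}$ does not go through.

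The paper avoids this entirely by passing to a quotient rather than building inside $\hat{L}$: choose a complement $V\subset C(\hat{L})$ with $C(\hat{L})=V\oplus(C(\hat{L})\cap L)$, so that $\hat{L}/V$ has dimension $\dim(\hat{L}/C(\hat{L}))+\dim(C(\hat{L})\cap L)<\infty$ by (1); then the $p$-subalgebra generated by the image of $L$ in $\hat{L}/V$ is automatically finite-dimensional and is the desired $p$-envelope (injectivity of $L\to\hat{L}/V$ follows from $V\cap L=0$). The moral is that you cannot hope to close up inside $\hat{L}$---you must first kill the infinite-dimensional central excess.
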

\begin{proof}
(1) As $G=L_{p}$, we have $[G,G]\subset L$ and $L$ be a Hom-Lie
ideal of $G$. Let $L_{1}=\{x\in L|\alpha(x)=x\}$. Consider the morphism
$\phi:G\rightarrow\mathrm{ad}(L)$, $x\mapsto(\mathrm{ad}\alpha(x))|_{L_{1}}$.
If $x\in\mathrm{ker}(\phi)$, then $\mathrm{ad}(\alpha(x))(L_{1})=0$.
Since $\mathrm{ker}(\mathrm{ad}\alpha(x))$ is a $p$-subalgebra of
$G$, we have $x\in C(G)$. It follows that $\mathrm{ker}(\phi)=C(G)$.
Thereby,
\[
\mathrm{dim}_{\mathbb{F}}(G/C(G))=\mathrm{dim}{}_{\mathbb{F}}(G/\mathrm{ker}(\phi))<\mathrm{dim}{}_{\mathbb{F}}\mathrm{ad}(L)<\infty.
\]

(2) Choose a subspace $V\subset C(\hat{L})$ such that $C(\hat{L})=V\oplus(C(\hat{L})\cap L)$.
Be analogous to the case of Lie algebra in \cite{SF}, it can be proved
that the $p$-subalgebra generated by $L$ in $\hat{L}/V$ is the
desired $p$-envelope of $L$.

(3) Considering $L\rightarrow G\hookrightarrow\hat{G}$, the universal
property of $\hat{L}$ ensures the existence of $\hat{f}$

\begin{align*}
L & \begin{array}{ccc}
 & \hookrightarrow\end{array}\hat{L}\\
\begin{array}{cc}
f & \downarrow\end{array} & \begin{array}{ccccc}
 &  &  & \downarrow & \hat{f}\end{array}\\
G & \begin{array}{ccc}
 & \hookrightarrow\end{array}\hat{G}
\end{align*}

If $f$ is onto, then $\hat{f}(\hat{L})\supset f(L)_{p}=G_{p}=\hat{G}$.
Along with $\hat{f}(\hat{L})\subset\hat{G}$ yields that $\hat{f}(\hat{L})=\hat{G}$,
which implies that $\hat{f}$ is surjective.
\end{proof}
Next, these results will be applied to representation theory. To this
end, several definitions and lemmas will be introduced first.

\begin{definition} Let $(L,[\cdot,\cdot]_{L},\alpha)$ be a multiplicative
Hom-Lie algebra.

(1) (see \cite{She}) A representation of $(L,[\cdot,\cdot]_{L},\alpha)$
on the vector space $V$ with respect to $\beta\in\mathrm{gl}(V)$
is a linear map $\rho_{\beta}:L\rightarrow\mathrm{gl}(V)$, such that
for any $x,y\in L$, the following equalities are satisfied:
\[
\rho_{\beta}(\alpha(x))\lyxmathsym{\textopenbullet}\beta=\beta\lyxmathsym{\textopenbullet}\rho_{\beta}(x),\rho_{\beta}([x,y]_{L})\lyxmathsym{\textopenbullet}\beta=\rho_{\beta}(\alpha(x))\rho_{\beta}(y)-\rho_{\beta}(\alpha(y))\rho_{\beta}(x).
\]

(2) The representation $\rho_{\beta}$ is called as faithful if $\textrm{ker}(\rho_{\beta})=0$.

(3) For a restricted Hom-Lie algebra $(L,[\cdot,\cdot]_{L},\alpha,[p]_{\alpha})$,
the representation $\rho_{\beta}$ is called as a $p$-representation
if $\rho_{\beta}(x^{[p]_{\alpha}})=\rho_{\beta}(x)^{p}$, for all
$x\in L$.

(4) For $x\in L$, $\rho_{\beta}(x)$ is said to be nilpotent if there
is $k\in\mathbb{N}$ such that $\rho_{\beta}(x)^{k}=0$.

\end{definition}

Be analogue to Proposition 3.1 in \cite{BM2}, we have

\begin{lemma} Let $(L,[\cdot,\cdot]_{L})$ be a Lie algebra and $\rho:L\rightarrow\mathrm{gl}(V)$
a representation. Let $\alpha:L\rightarrow L$ be a Lie algebra morphism
and $\beta\in\mathrm{gl}(V)$ be a linear map such that $\rho(\alpha(x))\lyxmathsym{\textopenbullet}\beta=\beta\lyxmathsym{\textopenbullet}\rho(x)$,
$\forall x\in L$. Then $(L,[\cdot,\cdot]_{\alpha},\alpha)$ with
$[\cdot,\cdot]_{\alpha}=\alpha\lyxmathsym{\textopenbullet}[\cdot,\cdot]_{L}$
is a Hom-Lie algebra and $\rho_{\beta}:L\rightarrow\mathrm{gl}(V)$,
where $\rho_{\beta}=\beta\circ\rho$, is a representation.

\end{lemma}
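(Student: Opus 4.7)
The plan is to verify the two separate assertions in order, relying throughout on the hypothesis $\rho(\alpha(x))\circ\beta=\beta\circ\rho(x)$ and on the fact that $\alpha$ is a Lie algebra morphism.

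First I would establish that $(L,[\cdot,\cdot]_{\alpha},\alpha)$ is a Hom-Lie algebra. Skew-symmetry of $[\cdot,\cdot]_{\alpha}=\alpha\circ[\cdot,\cdot]_{L}$ is immediate from that of $[\cdot,\cdot]_{L}$ together with the linearity of $\alpha$. For the Hom-Jacobi identity, the key is that $\alpha$ is a morphism of Lie algebras, so $[\alpha(u),\alpha(v)]_{L}=\alpha([u,v]_{L})$. Applied twice, this gives
\[
[\alpha(x),[y,z]_{\alpha}]_{\alpha}=\alpha\bigl([\alpha(x),\alpha([y,z]_{L})]_{L}\bigr)=\alpha^{2}\bigl([x,[y,z]_{L}]_{L}\bigr),
\]
and summing over cyclic permutations of $x,y,z$ reduces to $\alpha^{2}$ applied to the classical Jacobi identity, which vanishes.

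Next I would verify that $\rho_{\beta}=\beta\circ\rho$ satisfies the two axioms of a representation of the Hom-Lie algebra $(L,[\cdot,\cdot]_{\alpha},\alpha)$ with respect to $\beta$. The twist compatibility $\rho_{\beta}(\alpha(x))\circ\beta=\beta\circ\rho_{\beta}(x)$ follows immediately by inserting the definition of $\rho_{\beta}$ and applying the hypothesis $\rho(\alpha(x))\circ\beta=\beta\circ\rho(x)$ on the left-hand side. For the bracket axiom, I would expand the left-hand side
\[
\rho_{\beta}([x,y]_{\alpha})\circ\beta=\beta\circ\rho(\alpha([x,y]_{L}))\circ\beta=\beta^{2}\circ\rho([x,y]_{L})=\beta^{2}\bigl(\rho(x)\rho(y)-\rho(y)\rho(x)\bigr),
\]
using the intertwining relation in the middle step and the fact that $\rho$ is a representation of $L$ at the end. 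The right-hand side expands, via the same intertwining relation, as
\[
\beta\rho(\alpha(x))\beta\rho(y)-\beta\rho(\alpha(y))\beta\rho(x)=\beta^{2}\rho(x)\rho(y)-\beta^{2}\rho(y)\rho(x),
\]
which matches the left-hand side.

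There is no real obstacle here; the statement is a compatibility check of the Yau-twist construction, and everything collapses after observing that the hypothesis on $\rho$ and $\beta$ is strong enough to let $\beta$'s traverse factors of $\rho(\alpha(\cdot))$ freely. The only bookkeeping care required is to always insert $\alpha$ in the bracket before composing with $\beta$, so that the two twists $\alpha$ and $\beta$ are correctly coordinated through the intertwining hypothesis.
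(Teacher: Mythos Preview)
Your proof is correct. The paper does not actually supply its own proof of this lemma; it simply introduces it with the phrase ``Be analogue to Proposition 3.1 in \cite{BM2}'' and states the result, so your direct verification of the Hom-Jacobi identity and the two representation axioms is exactly the content one would expect behind that reference.
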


The following lemma is Iwasawa's theorem in the Lie algebra case.

\begin{lemma} (see \cite{SF}) Let $L$ be a finite-dimensional Lie
algebra, $G$ a finite-dimensional $p$-envelope of $L$ and $u(G)$
a restricted universal enveloping algebra. Then the mapping $\mathscr{\rho}_{\mathfrak{L}}:L\rightarrow\mathrm{gl}(u(G))$
given by the left multiplication in $u(G)$ is a representation satisfying
the following equivalence relation:
\[
\rho_{\mathfrak{L}}(x)\begin{array}{cc}
nilpotent\end{array}\Longleftrightarrow\begin{array}{cc}
 & \mathrm{ad}x\end{array}nilpotent
\]
holds for all $x\in L$. \end{lemma}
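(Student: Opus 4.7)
\noindent\emph{Proof plan.} My plan is to use the finite-dimensional associative algebra $u(G)$ as a bridge, exploiting the identities $\rho_{\mathfrak{L}}(x)=L_x$ (left multiplication by $x$), $\mathrm{ad}\, x = L_x - R_x$ as operators on $u(G)$, $[L_x,R_x]=0$, and $x^p = x^{[p]}$ in $u(G)$. The easy direction ($\rho_{\mathfrak{L}}(x)$ nilpotent $\Rightarrow$ $\mathrm{ad}\, x$ nilpotent) I handle first: if $L_x^n=0$, then $x^n = L_x^n(1) = 0$, whence also $R_x^n = 0$, so $\mathrm{ad}\, x = L_x-R_x$ is a difference of commuting nilpotent operators, nilpotent on all of $u(G)$ and therefore on $L\subset u(G)$.

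For the converse, I propagate $\mathrm{ad}\, x$-nilpotency stepwise through $L\hookrightarrow G\hookrightarrow u(G)$. First, I set $V := \bigcup_{n\ge 0}\ker(\mathrm{ad}\, x)^n\subset G$. A standard Leibniz computation makes $V$ a Lie subalgebra; the crucial step is to show $V$ is closed under $(\cdot)^{[p]}$, which I obtain by regarding $a^{[p]}=a^p$ inside $u(G)$ and expanding $(\mathrm{ad}\, x)^M(a^p)$ via the multinomial Leibniz rule for derivations of associative algebras: if $(\mathrm{ad}\, x)^N a=0$, then a pigeonhole bound on the multi-indices yields $(\mathrm{ad}\, x)^{p(N-1)+1}(a^{[p]})=0$. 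Hence $V$ is a $p$-subalgebra of $G$ containing $i(L)$, and since $G=i(L)_p$ we conclude $V=G$; finite-dimensionality upgrades this local nilpotency to honest nilpotency of $\mathrm{ad}\, x$ on $G$. The same Leibniz/pigeonhole argument applied to PBW monomials (of total degree at most $\dim(G)\cdot(p-1)$) then gives nilpotency of $\mathrm{ad}\, x$ on all of $u(G)$.

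To finish, let $f(T)$ be the minimal polynomial of $x$ in the finite-dimensional associative algebra $u(G)$; it coincides with the minimal polynomials of both $L_x$ and $R_x$, so these commuting operators share the same spectrum $\{\lambda_1,\dots,\lambda_m\}\subset\overline{\mathbb{F}}$, and the eigenvalues of $\mathrm{ad}\, x = L_x-R_x$ lie among the differences $\lambda_i-\lambda_j$. Nilpotency of $\mathrm{ad}\, x$ on $u(G)$ then forces all $\lambda_i$ to equal a single $\lambda$, so $f(T)=(T-\lambda)^r$. To pin down $\lambda$, I use the augmentation $\epsilon\colon u(G)\to\mathbb{F}$ defined by $\epsilon(1)=1$ and $\epsilon(g)=0$ for $g\in G$; this descends from the tensor algebra $T(G)$ to $u(G)$ because the defining relations $gh-hg-[g,h]$ and $g^p-g^{[p]}$ all lie in its kernel. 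Applying $\epsilon$ to $f(x)=0$ gives $(-\lambda)^r=0$, whence $\lambda=0$, $x^r=0$ in $u(G)$, and finally $\rho_{\mathfrak{L}}(x)=L_x$ is nilpotent. The main obstacle is the propagation step from $L$ to $G$: it is handled cleanly only by invoking the associative identity $a^{[p]}=a^p$ in $u(G)$, rather than by any purely Lie-theoretic manipulation on $G$.
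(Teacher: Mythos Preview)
The paper does not supply its own proof of this lemma; it is quoted from Strade--Farnsteiner and used as a black box in Theorem~4.8. So there is no argument in the paper to compare against, and I assess your proof on its own merits.

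Your proof contains a genuine gap at the spectral step. You assert that nilpotency of $\mathrm{ad}\,x = L_x - R_x$ on $u(G)$ forces all eigenvalues $\lambda_i$ of $x$ to coincide. This inference is invalid: the eigenvalues of $L_x - R_x$ lie among the differences $\lambda_i - \lambda_j$, but only those differences that actually occur in a joint triangularization of the commuting pair $(L_x,R_x)$ are eigenvalues of $\mathrm{ad}\,x$; nilpotency only kills \emph{those} particular differences, not all of them. Concretely, take $L = G = \mathbb{F} e$ one-dimensional abelian with $p$-map $e^{[p]} = e$. Then $u(G) = \mathbb{F}[e]/(e^p - e)$ is commutative, so $\mathrm{ad}\,e = 0$ is trivially nilpotent on $u(G)$, yet the minimal polynomial of $e$ is $T^p - T$ with $p$ distinct roots. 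Your augmentation step then yields only $f(0)=0$, i.e.\ that $0$ is \emph{one} of the eigenvalues, not the only one.

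In fact this very example shows that the lemma, read literally as a claim about \emph{every} finite-dimensional $p$-envelope $G$, is false: $\mathrm{ad}_L e = 0$ is nilpotent but $L_e$ is not, since $e^p = e \neq 0$ in $u(G)$. The result in \cite{SF} is an existence statement: one constructs a \emph{specific} finite-dimensional $p$-envelope (obtained from the universal $p$-envelope $\hat{L}\subset U(L)$ modulo a suitable central subspace, as in Proposition~4.4(2) here) for which the equivalence holds. Your propagation of $\mathrm{ad}\,x$-nilpotency from $L$ to $G$ to $u(G)$ via the Leibniz rule and the identity $a^{[p]}=a^p$ is correct and is indeed the right mechanism; but the final passage from ``$\mathrm{ad}\,x$ nilpotent on $u(G)$'' to ``$x$ nilpotent in $u(G)$'' cannot be achieved by a general spectral argument and requires the particular choice of $p$-structure on $G$.
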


Thus, we obtain the following theorem, which can be seen as the Hom-version
of Iwasawa's theorem.

\begin{theorem}

Let $(L,[\cdot,\cdot]_{\alpha},\alpha)$ be a finite-dimensional involutive
Hom-Lie algebra, where $[\cdot,\cdot]_{\alpha}=\alpha\circ[\cdot,\cdot]$,
$[\cdot,\cdot]$ is a Lie bracket and $\alpha$ is a Lie algebra morphism.
Then $L$ possesses a finite-dimensional faithful representation $\rho_{\beta}$
such that $\mathrm{ad}(x)$ is nilpotent if and only if $\rho_{\beta}(x)$
is nilpotent for all $x\in L_{1}=\{x\in L|\alpha(x)=x\}$.

\end{theorem}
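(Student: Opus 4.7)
The plan is to reduce the theorem to the classical Iwasawa theorem (Lemma 4.7) via an $\alpha$-equivariant twist of the left-multiplication representation on the restricted universal enveloping algebra of a finite-dimensional $p$-envelope.

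Viewing $L$ as an ordinary finite-dimensional Lie algebra with bracket $[\cdot,\cdot]$, on which $\alpha$ acts as a Lie algebra involution, Proposition 4.4(2) yields a finite-dimensional $p$-envelope $G$ of $(L,[\cdot,\cdot])$, and Lemma 4.7 provides a finite-dimensional faithful representation $\rho_{\mathfrak{L}}:L\to \mathrm{gl}(u(G))$ given by left multiplication on the restricted universal enveloping algebra $u(G)$, together with the classical equivalence that $\rho_{\mathfrak{L}}(x)$ is nilpotent iff $\mathrm{ad}_L(x)$ is nilpotent, where $\mathrm{ad}_L$ denotes the adjoint for the Lie bracket $[\cdot,\cdot]$. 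I would arrange in the construction of Proposition 4.4(2) that $\alpha$ lifts to an involution of $G$ (via Theorem 4.3 applied to the universal $p$-envelope $\hat L$ and an $\hat\alpha$-stable choice of complement $V$ inside $C(\hat L)$), hence induces an involutive associative-algebra automorphism $\beta$ of $u(G)$ with $\beta|_L=\alpha$ and $\beta^2=\mathrm{id}$.

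Setting $\rho_\beta:=\beta\circ \rho_{\mathfrak{L}}$, the key intertwining identity is immediate: for every $x\in L$ and $u\in u(G)$,
\[
\rho_{\mathfrak{L}}(\alpha(x))(\beta(u)) = \alpha(x)\cdot\beta(u) = \beta(x)\cdot\beta(u) = \beta(x\cdot u) = \beta(\rho_{\mathfrak{L}}(x)(u)),
\]
so by Lemma 4.6 (applied to the underlying Lie algebra $(L,[\cdot,\cdot])$) $\rho_\beta$ is a representation of the Hom-Lie algebra $(L,[\cdot,\cdot]_\alpha,\alpha)$ with twist $\beta$. Faithfulness is automatic because $\rho_{\mathfrak{L}}(x)(1)=x$ makes $\rho_{\mathfrak{L}}$ injective and $\beta$ is invertible.

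For the nilpotency equivalence, fix $x\in L_1$, so $\beta(x)=\alpha(x)=x$. A short induction using $\beta^2=\mathrm{id}$ gives $\rho_\beta(x)^k(u) = x^k\cdot \beta^k(u)$, whence $\rho_\beta(x)^k=0$ iff $x^k=0$ in $u(G)$ iff $\rho_{\mathfrak{L}}(x)^k=0$. Parallel to this, the identity $[x,\alpha(z)]=\alpha([x,z])$ for $x\in L_1$ together with $\alpha^2=\mathrm{id}$ yields by induction $\mathrm{ad}_\alpha(x)^k = \alpha^{k\bmod 2}\circ \mathrm{ad}_L(x)^k$, so $\mathrm{ad}_\alpha(x)$ and $\mathrm{ad}_L(x)$ are simultaneously nilpotent. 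Chaining these two equivalences with the classical Iwasawa equivalence from the first step produces the required biconditional.

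The most delicate point is the $\alpha$-equivariant construction of the finite-dimensional $p$-envelope: one must select an $\hat\alpha$-stable complement of $C(\hat L)\cap L$ inside $C(\hat L)$ so that $\alpha$ descends to the quotient used in Proposition 4.4(2) and thereby to an involution of $u(G)$. When $p\ne 2$ this follows from the $\pm 1$-eigenspace decomposition of the involution $\hat\alpha$ on $C(\hat L)$; in characteristic $2$ the map $\hat\alpha$ need not be semisimple and one has to exhibit an $\hat\alpha$-invariant complement block by block, which is the main technical hurdle of the proof.
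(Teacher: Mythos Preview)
Your approach coincides with the paper's: both reduce to the classical Iwasawa theorem (Lemma 4.7) and then twist via Lemma 4.6, setting $\rho_\beta=\beta\circ\rho_{\mathfrak L}$ for a $\beta\in\mathrm{gl}(u(G))$ satisfying $\rho_{\mathfrak L}(\alpha(x))\circ\beta=\beta\circ\rho_{\mathfrak L}(x)$. The paper's proof is a three-line sketch that simply posits such a $\beta$ and declares the remaining verifications ``not difficult''; it neither constructs $\beta$ nor checks faithfulness or the nilpotency equivalence.

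Your proposal therefore supplies precisely the content the paper omits. Producing $\beta$ as the associative-algebra automorphism of $u(G)$ induced by an extension of $\alpha$ to $G$ is the natural way to realise the intertwiner the paper assumes, and your computations $\rho_\beta(x)^k(u)=x^k\cdot\beta^k(u)$ and $\mathrm{ad}_\alpha(x)^k=\alpha^k\circ\mathrm{ad}_L(x)^k$ for $x\in L_1$ (using $\beta^2=\mathrm{id}$ and that $\mathrm{ad}_L(x)$ commutes with $\alpha$) correctly bridge the Hom and classical nilpotency conditions. The $\hat\alpha$-equivariant choice of complement inside $C(\hat L)$ needed to make the finite-dimensional $p$-envelope $\alpha$-stable is a genuine technical point the paper does not address; your eigenspace argument for $p\ne 2$ is fine, and the characteristic-$2$ obstruction you isolate is real, though it lies outside what the paper attempts to justify.
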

\begin{proof}
Assume that $u(G)$ and $\mathscr{\rho}_{\mathfrak{L}}$ are given
as in Lemma 4.7. Let $\beta\in\mathrm{gl}(u(G))$ satisfying $\rho_{\mathfrak{L}}(\alpha(x))\text{\textopenbullet}\beta=\beta\text{\textopenbullet}\rho_{\mathfrak{L}}(x)$.
Then according to Lemma 4.6, $\rho_{\beta}=\beta\circ\rho_{\mathfrak{L}}:L\rightarrow\mathrm{gl}(u(G))$
is a representation of the Hom-Lie algebra $(L,[\cdot,\cdot]_{\alpha},\alpha)$.
It's not difficult to prove that $\rho_{\beta}$ is the representation
satisfying the requirement.
\end{proof}
In the sequel, we propose to describe all $p$-envelopes of a finite-dimensional
Hom-Lie algebra $(L,[.,.]_{L},\alpha)$ by one of minimal dimension.
A $p$-envelope of $L$ is referred to be minimal if its dimension
is minimal among all $p$-envelopes of $L$. First, we have

\begin{lemma} Let $(L,[.,.]_{L},\alpha)$ be a Hom-Lie algebra and
$(G,[.,.]_{G},\alpha_{G},[p],i)$ a universal $p$-envelope of $L$.
Then

(1) Any vector space $V$ with $\alpha(V)\subset V$ and $V\supset i(L)$
is a Hom-Lie ideal of $G$.

(2) If $V\vartriangleleft G$ is a Hom-Lie ideal with $V\cap i(L)=0$,
then $V\subset\mathfrak{z}(G)$, where $\mathfrak{z}(G)=\{x\in G|[x,y]=0,\:\forall y\in G\}$.

(3) Let $V\vartriangleleft G$ be a Hom-Lie ideal with $V\cap i(L)=0$,
then there exists a $p$-envelope $(G',[.,.]_{G'},\alpha_{G'},[p]',i')$
such that $G'\subset G/V$.

\end{lemma}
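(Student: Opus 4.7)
The plan is to prove the three parts in order, with the structural fact $[G,G]\subset i(L)$ serving as the engine for all three.

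For part (1), I would begin by establishing $[G,G]\subset i(L)$. Since $i(L)$ is a Hom-Lie subalgebra of $G$ (closed under both bracket and $\alpha_{G}$) and $G=i(L)_{p}$ is the $p$-subalgebra it generates, every element of $G$ is obtained from $i(L)$ by iteratively applying the $p$-operation. For $x,y\in i(L)$, axiom (R1) expresses
\[
[y^{[p]_{\alpha_{G}}},\alpha_{G}^{p-1}(x)]=\mathrm{ad}(\alpha_{G}^{p-1}(y))\circ\cdots\circ\mathrm{ad}(y)(x),
\]
a nested bracket whose entries all lie in $i(L)$, hence the result is in $i(L)$ by closure. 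An induction on the depth of $p$-power nesting propagates this to $[G,i(L)]\subset i(L)$, and a further induction (using skew-symmetry and the Hom-Jacobi identity) upgrades this to $[G,G]\subset i(L)$; this is the same reasoning already invoked in the proof of Proposition 4.4(1). Granting $[G,G]\subset i(L)$, statement (1) is immediate: for any $\alpha$-invariant $V\supset i(L)$, $[V,G]\subset[G,G]\subset i(L)\subset V$, so $V$ is a Hom-Lie ideal.

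For part (2), let $V\triangleleft G$ be a Hom-Lie ideal with $V\cap i(L)=0$. On the one hand $[V,G]\subset V$ by the ideal property; on the other $[V,G]\subset[G,G]\subset i(L)$ by the structural fact from (1). Combining, $[V,G]\subset V\cap i(L)=0$, so $V\subset\mathfrak{z}(G)$.

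For part (3), the centrality of $V$ from (2) guarantees that the $p$-structure on $G$ descends cleanly: the correction polynomials $s_{i}(x,y)$ in axiom (R3) vanish modulo $V$ because $V\subset\mathfrak{z}(G)$, so $G/V$ inherits the structure of a restricted Hom-Lie algebra with induced bracket, twist $\alpha_{G/V}$ and $p$-operation $[p]'$. The composition $i'\colon L\to G\to G/V$ is a morphism of Hom-Lie algebras and is injective since $V\cap i(L)=0$. Setting $G':=i'(L)_{p}\subset G/V$, the pair $(G',i')$ is a $p$-envelope of $L$ lying inside $G/V$ by construction.

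The main obstacle is establishing $[G,G]\subset i(L)$ in (1): the twist $\alpha_{G}^{p-1}$ appearing in (R1) prevents a direct transcription of the classical restricted Lie algebra argument, so one must carefully track how $\alpha_{G}$ acts on $i(L)$ and its iterated $p$-powers (invoking involutivity from Theorem 4.3 where convenient to ensure $\alpha_{G}^{p-1}$ is invertible on the relevant subspaces). Once that single structural lemma is in hand, parts (2) and (3) follow by purely formal manipulations.
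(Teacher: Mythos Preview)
Your arguments for parts (1) and (2) are essentially the paper's: both rest on the structural fact $[G,G]_G\subset i(L)$ (which the paper simply asserts, having used it already in Proposition 4.4(1)), and from there the two conclusions follow exactly as you describe.

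Part (3), however, has a genuine gap. Your claim that the $p$-structure on $G$ ``descends cleanly'' to $G/V$ because $V$ is central does not hold. To check well-definedness you need $(x+v)^{[p]}\equiv x^{[p]}\pmod V$ for every $v\in V$. Centrality of $V$ does kill the cross-terms $s_i(x,v)$ (these are iterated brackets involving $v$), leaving $(x+v)^{[p]}=x^{[p]}+v^{[p]}$. But nothing forces $v^{[p]}\in V$: a central Hom-Lie ideal need not be a $p$-ideal. (Axiom (R1) only gives $\mathrm{ad}(v^{[p]})\circ\alpha_G^{p-1}=0$, i.e.\ $v^{[p]}\in C(G)$, which is generally larger than $V$.) Your phrasing ``the correction polynomials $s_i(x,y)$ vanish modulo $V$'' is also misleading---it is $s_i(x,v)$ that vanish outright, not $s_i(x,y)$ modulo $V$.

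The paper sidesteps this obstruction entirely: rather than pushing down the existing $[p]$, it observes that $G$ is restrictable, so by Theorem~3.6 the quotient $G/V$ is restrictable, and then Theorem~3.5 furnishes \emph{some} $p$-structure $[p]'$ on $G/V$ (not necessarily induced from $[p]$). One then takes $G'$ to be the $p$-subalgebra of $(G/V,[p]')$ generated by $i'(L)$, where $i'=\pi\circ i$ is injective since $V\cap i(L)=0$. You should replace your descent argument with this restrictability route.
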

\begin{proof}
(1) Note that $[V,G]_{G}\subset[G,G]_{G}\subset i(L)\subset V$. Thereby,
$[V,G]_{G}\subset V$. Along with $\alpha(V)\subset V$, one gets
that $V$ is a Hom-Lie ideal of $G$.

(2) Since $V$ is a Hom-Lie ideal of $G$, we have $[V,G]_{G}\subset V$.
It's known from (1) that $[V,G]_{G}\subset i(L)$, and the assumption
$V\cap i(L)=0$ implies that $[V,G]_{G}=0$. Therefore, $V\subset\mathfrak{z}(G)$.

(3) Let $\pi:G\rightarrow G/V$ be the canonical projection. By virtue
of Theorem 4.3, the algebra $G/V$ is restrictable. Let $[p]':G/V\rightarrow G/V$
be a $p$-structure on $G/V$ and $G'$ the $p$-subalgebra of $G/V$
generated by $\pi\circ i(L)$. Clearly, $i':=\pi\circ i$ is injective.
Therefore, the 5-tuple $(G',[.,.]_{G},\alpha_{G'},[p]',i')$ is the
desired $p$-envelope of $L$.
\end{proof}
Based on this lemma, we obtain the following proposition, which proof
is analogous to the case of Lie algebras (see \cite{SF}):

\begin{proposition} Let $(G,[.,.]_{G},\alpha_{G},[p],i)$ and $(G',[.,.]_{G'},\alpha_{G'},[p]',i')$
be two $p$-envelopes of the Hom-Lie algebra $(L,[.,.]_{L},\alpha)$,
then there exists a (nonrestricted) morphism $f:G\rightarrow G'$
such that $f\circ i=i'$.

\end{proposition}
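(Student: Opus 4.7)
The plan is to construct $f$ using the universal $p$-envelope $\hat{L}$ of $L$ (provided by Theorem 4.3) as intermediary, mirroring the classical Lie-algebra argument from \cite{SF}. First, applying the universal property in Definition 4.2(2) to the Hom-Lie morphisms $i : L \to G$ and $i' : L \to G'$ (note that $G$ and $G'$ are already restricted), I obtain unique restricted morphisms $\hat{\pi} : \hat{L} \to G$ and $\hat{\pi}' : \hat{L} \to G'$ satisfying $\hat{\pi} \circ \hat{i} = i$ and $\hat{\pi}' \circ \hat{i} = i'$, where $\hat{i} : L \hookrightarrow \hat{L}$ denotes the canonical inclusion. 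The map $\hat{\pi}$ is surjective, since its image is a $p$-subalgebra of $G$ containing $i(L)$, and hence equals $i(L)_p = G$. Moreover, $\ker \hat{\pi} \cap \hat{i}(L) = 0$ by the injectivity of $\hat{\pi} \circ \hat{i} = i$, so Lemma 4.9(2) yields that $K := \ker \hat{\pi}$ is central in $\hat{L}$.

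Next, I would select an $\alpha_{\hat{L}}$-stable vector space complement $V$ of $K$ in $\hat{L}$ containing $\hat{i}(L)$; its existence exploits the fact that both $K$ and $\hat{i}(L)$ are $\alpha_{\hat{L}}$-stable and intersect trivially. The restriction $\hat{\pi}|_V : V \to G$ is then a linear bijection that commutes with the twists, so I define
\[
f := \hat{\pi}' \circ (\hat{\pi}|_V)^{-1} : G \longrightarrow G'.
\]
The identity $f \circ i = i'$ is immediate from $\hat{i}(L) \subset V$ and $\hat{\pi}' \circ \hat{i} = i'$. Bracket compatibility uses the crucial fact $[\hat{L}, \hat{L}]_{\hat{L}} \subset \hat{i}(L) \subset V$ (established inside the proof of Lemma 4.9(1)), which ensures $[v_1, v_2]_{\hat{L}} \in V$ for all $v_1, v_2 \in V$; hence $f([\hat{\pi}(v_1), \hat{\pi}(v_2)]_G) = \hat{\pi}'([v_1, v_2]_{\hat{L}}) = [f(\hat{\pi}(v_1)), f(\hat{\pi}(v_2))]_{G'}$. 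Compatibility with the twists then follows from the $\alpha$-stability of $V$. The map $f$ need not be restricted, because $V$ is generally not closed under the $p$-operation, which is exactly why the conclusion is only asserted up to a nonrestricted morphism.

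The main obstacle I foresee is the construction of the $\alpha_{\hat{L}}$-stable complement $V$: in general, an invariant subspace under a linear operator need not admit an invariant complement (a Fitting-type obstruction on $\hat{L}/\hat{i}(L)$). I would handle this by invoking the involutive setting of Theorem 4.3, where $\hat{L}$ inherits an involutive twist from $L$, and leveraging the resulting eigenspace decomposition of $\alpha_{\hat{L}}$ acting on $\hat{L}/\hat{i}(L)$; alternatively, for finite-dimensional $L$, a Jordan--Chevalley-style argument on $\alpha_{\hat{L}}$ combined with a careful iterative lifting of an $\alpha$-invariant basis of $\hat{i}(L)$ to one of $\hat{L}$ (avoiding $K$) produces the desired $V$. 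This step is precisely the technical content that the authors suppress when they appeal to the Lie-algebra analogue in \cite{SF}.
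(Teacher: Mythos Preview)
Your approach is precisely the \cite{SF} argument the paper invokes, and the paper gives nothing beyond that one-line reference, so at the level of strategy you match it exactly. Your identification of the $\alpha_{\hat{L}}$-stability requirement on the complement $V$ is correct and is the one genuinely new obstruction in the Hom-Lie setting: without it, for $v\in V$ the element $\alpha_{\hat{L}}(v)$ decomposes as $v'+k$ with $v'\in V$ and $k\in K$, and then
\[
\alpha_{G'}\bigl(f(\hat{\pi}(v))\bigr)-f\bigl(\alpha_G(\hat{\pi}(v))\bigr)=\hat{\pi}'(k),
\]
which need not vanish since $K\not\subset\ker\hat{\pi}'$ in general. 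Both of your proposed remedies import hypotheses absent from the statement, and note that the involutive assumption in characteristic~$2$ yields only $(\alpha_{\hat{L}}-\mathrm{id})^2=0$ rather than semisimplicity, so even that route requires further argument. In short, your write-up is at least as complete as the paper's own treatment, and you have correctly isolated a technical gap that the bare appeal to \cite{SF} conceals.
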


\begin{proposition} Let $(L,[.,.]_{L},\alpha)$ be a finite-dimensional
involutive Hom-Lie algebra, $(G,[.,.]_{G},$ $\alpha_{G},[p],i)$
and $(G',[.,.]_{G'},\alpha_{G'},[p]',i')$ two finite-dimensional
$p$-envelopes of $L$. Assume that $f:G\rightarrow G'$ is a morphism
such that $f\circ i=i'$. Then the following statements hold.

(1) There is a Hom-Lie ideal $J\subset C(G')$ such that $G'=f(G)\oplus J$.

(2) There exists a $p$-envelope $H$ of $L$ with $H\subset f(G)$.

\end{proposition}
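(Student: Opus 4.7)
The plan is to first establish the decomposition $G' = f(G) \oplus J$ with $J \subset C(G')$ a Hom-Lie ideal in part (1), then derive (2) by passing to the quotient $G'/J$. The crux of (1) is the identity $G' = f(G) + C(G')$, obtained by comparing the adjoint representations of $G$ and $G'$ on their respective fixed-point subspaces.

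To begin, I would show that $f(G)$ is a Hom-Lie ideal of $G'$. It is a Hom-Lie subalgebra by the morphism property of $f$, and it is $\alpha_{G'}$-invariant because $f \circ \alpha_G = \alpha_{G'} \circ f$. Since $G' = i'(L)_p$ and $f(G) \supset i'(L) = f(i(L))$, the bracket-generation fact used in the proof of Lemma 4.9(1) yields $[G', G']_{G'} \subset i'(L) \subset f(G)$, so $f(G) \vartriangleleft G'$.

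Next, I would imitate the proof of Proposition 4.4(1). Set $L_1 = \{v \in i(L) \mid \alpha_G(v) = v\}$ and $L'_1 = \{v \in i'(L) \mid \alpha_{G'}(v) = v\}$, and define $\phi: G \to \mathrm{gl}(L_1)$ and $\phi': G' \to \mathrm{gl}(L'_1)$ by $x \mapsto (\mathrm{ad}\,\alpha(x))|_{L_1}$; their kernels are $C(G)$ and $C(G')$. The identities $f \circ i = i'$ and $f \circ \alpha_G = \alpha_{G'} \circ f$ identify $L_1 \cong L'_1$ and give $\phi = \phi' \circ f$, whence $\phi'(f(G)) = \phi'(G')$, so each $x \in G'$ has the form $f(g) + z$ with $z \in C(G')$. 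By Theorem 4.3 and Proposition 4.4(3), $\alpha_{G'}$ is an involution (inherited from the involutive universal $p$-envelope $\hat L$ via a surjective restricted morphism), so $\alpha_{G'}^{p-1}$ is bijective and $C(G')$ coincides with the usual center, giving $[C(G'), G']_{G'} = 0$. Choose an $\alpha_{G'}$-invariant vector-space complement $J$ of $f(G) \cap C(G')$ inside $C(G')$; then $G' = f(G) \oplus J$ and $J \subset C(G')$ is a Hom-Lie ideal, proving (1).

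For part (2), apply (1): since $[J, G']_{G'} = 0$, the projection $\pi: G' \to G'/J$ restricts to an isomorphism $f(G) \xrightarrow{\sim} G'/J$ of Hom-Lie algebras. By Theorems 3.6 and 3.5, $G'/J$ is restricted; let $H_0$ denote the $p$-subalgebra of $G'/J$ generated by $\pi(i'(L))$. Since $i'(L) \cap J \subset f(G) \cap J = 0$, the map $\pi \circ i': L \to H_0$ is injective, so $(H_0, \pi \circ i')$ is a $p$-envelope of $L$; transporting $H_0$ along $G'/J \cong f(G)$ yields the desired $p$-envelope $H \subset f(G)$. The main obstacle is the identity $G' = f(G) + C(G')$ in (1): matching the two adjoint representations through $f$, and exploiting the involutive hypothesis to collapse the $\alpha^{p-1}$-center to the usual center so that the chosen $\alpha_{G'}$-invariant complement $J$ becomes automatically a Hom-Lie ideal.
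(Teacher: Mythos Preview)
Your argument for part (1) has a genuine gap at the step ``$\phi=\phi'\circ f$, whence $\phi'(f(G))=\phi'(G')$.'' From $\phi=\phi'\circ f$ you only obtain $\phi'(f(G))=\phi(G)\subset\phi'(G')$; the reverse inclusion $\phi'(G')\subset\phi'(f(G))$ is exactly the assertion $G'=f(G)+C(G')$ you are trying to establish, so the reasoning is circular. Nothing in the mere comparison of the two adjoint maps forces their images to coincide---a~priori $\dim G'/C(G')$ and $\dim G/C(G)$ could differ. The paper closes this gap differently: it first invokes Proposition~4.10 to produce a morphism $g:G'\to G$ with $g\circ i'=i$, and then takes the Fitting decomposition of $G'$ relative to the endomorphism $\mu:=f\circ g$. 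Writing $G'=G'_{0}\oplus G'_{1}$ with $G'_{0}=\ker(\mu^{n})$ and $G'_{1}=\mathrm{im}(\mu^{n})$, the identity $\mu\circ i'=i'$ gives $G'_{0}\cap i'(L)=0$, hence $G'_{0}\subset\mathfrak{z}(G')\subset C(G')$ by Lemma~4.9(2), while $G'_{1}=\mu(G'_{1})\subset f(G)$. This is what yields $G'=f(G)+C(G')$; your adjoint-comparison shortcut does not.

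A secondary issue: your claim that $\alpha_{G'}$ is an involution ``inherited from the involutive universal $p$-envelope $\hat{L}$ via a surjective restricted morphism'' is not justified by the references you give. The definition of a $p$-envelope requires only that $(G',\alpha_{G'},[p]')$ be restricted, hence multiplicative; neither Theorem~4.3 nor Proposition~4.4(3) says that an \emph{arbitrary} finite-dimensional $p$-envelope inherits an involutive twist, and $G'$ is not assumed to arise as a quotient of $\hat{L}$. The paper sidesteps this by landing in $\mathfrak{z}(G')$ directly (via Lemma~4.9(2)) rather than by collapsing $C(G')$ to the ordinary center.

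For part (2) your detour through $G'/J$ works, but the paper's route is shorter and does not depend on (1): since $G$ is restrictable, so is $f(G)$ by Theorem~3.6; choose any $p$-structure $[p]''$ on $f(G)$ and let $H\subset f(G)$ be the $p$-subalgebra generated by $i'(L)$. Then $(H,[.,.]_{G'},\alpha_{G'}|_{H},[p]'',i')$ is the required $p$-envelope.
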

\begin{proof}
(1) According to Proposition 4.9, there exists a morphism $g:G'\rightarrow G$
with $g\circ i'=i$. Set $\mu:=f\circ g$ and consider the Fitting
decomposition of $G'=G'_{0}\oplus G'_{1}$ with respect to $\mu$.
Since $\mu\circ i'=f\circ g\circ i'=f\circ i=i'$, one gets $\mu^{k}\circ i'=i'$,
for any $k\in\mathbb{N}$. Without loss of generality, suppose that
$G'_{0}=\mathrm{ker}(\mu^{n})$. Then $G'_{0}$ is a Hom-Lie ideal
of $G'$. It follows from $\mu^{n}\circ i'=i'$ that $G'_{0}\cap i'(L)=0$.
And by virtue of Lemma 4.8, one gets $G'_{0}\subset C(G')$. Along
with $G'_{1}=\mu(G'_{1})\subset f(G)$, we have $G'=C(G')+f(G)$.
Furthermore, a direct complement $J$ of $f(G)$ with $J\subset C(G')$
will be chosen such that $G'=f(G)\oplus J$.

(2) Since $G$ is restrictable, $f(G)$ is restrictable. Thus we may
choose a $p$-structure $[p]''$ on $f(G)$ and let $H\subset f(G)$
be the $p$-subalgebra generated by $i'(L)$. Therefore, the 5-tuple
$(H,[.,.]_{G'},(\alpha_{G'})|_{H},[p]'',i')$ is the desired $p$-envelope
of $L$.
\end{proof}
\begin{theorem}Let $(L,[.,.]_{L},\alpha_{L})$ be a finite-dimensional
involutive Hom-Lie algebra. Then the following statements hold.

(1) Any two minimal $p$-envelopes of $L$ are isomorphic as ordinary
Hom-Lie algebras.

(2) If $(G,[.,.]_{G},\alpha_{G},[p],i)$ is a finite-dimensional $p$-envelope
of $L$, then there is a minimal $p$-envelope $H\subset G$ and a
Hom-Lie ideal $J\subset C(G)$ such that $G=H\oplus J$ and $i(L)\subset H$.

(3) A finite-dimensional $p$-envelope $(G,[.,.]_{G},\alpha_{G},[p],i)$
is minimal if and only if $C(G)$ $\subset i(L)$.

\end{theorem}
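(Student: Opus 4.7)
The plan is to build all three parts on a single injectivity principle, which I will establish once and then invoke repeatedly. Given any morphism $f\colon G_{1}\to G_{2}$ of Hom-Lie algebras between two $p$-envelopes $(G_{1},i_{1})$ and $(G_{2},i_{2})$ of $L$ satisfying $f\circ i_{1}=i_{2}$, the subspace $\ker(f)$ is an $\alpha_{G_{1}}$-invariant Hom-Lie ideal of $G_{1}$ whose intersection with $i_{1}(L)$ is zero (since $i_{2}$ is injective). Lemma~4.8(3) then supplies a $p$-envelope of $L$ inside $G_{1}/\ker(f)$, of dimension at most $\dim G_{1}-\dim\ker(f)$, so if $G_{1}$ is minimal we must have $\ker(f)=0$. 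The morphisms needed to feed this principle in each part will come from Proposition~4.9.

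For part (1), given two minimal $p$-envelopes $(G_{1},i_{1})$ and $(G_{2},i_{2})$, Proposition~4.9 produces morphisms $f\colon G_{1}\to G_{2}$ and $g\colon G_{2}\to G_{1}$ with $f\circ i_{1}=i_{2}$ and $g\circ i_{2}=i_{1}$. Applying the principle in both directions makes $f,g$ injective, and a dimension count forces them to be isomorphisms of Hom-Lie algebras. For part (2), pick a minimal $p$-envelope $(G^{*},i^{*})$ of $L$ (which exists by Proposition~4.4(2)) and use Proposition~4.9 to get $f\colon G^{*}\to G$ with $f\circ i^{*}=i$. The injectivity principle applied to the minimal source $G^{*}$ shows $f$ is injective; setting $H:=f(G^{*})$ and transporting the $p$-structure of $G^{*}$ along $f$ realises a minimal $p$-envelope of $L$ sitting inside $G$ with $i(L)=f(i^{*}(L))\subset H$. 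Proposition~4.10(1) applied to $f$ then produces the required ideal $J\subset C(G)$ with $G=H\oplus J$.

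For part (3), the ($\Leftarrow$) direction follows at once from (2): if $C(G)\subset i(L)$, then the ideal $J$ produced in (2) lies in $C(G)\subset i(L)\subset H$, yet $J\cap H=0$, so $J=0$ and $G=H$ is minimal. For ($\Rightarrow$), suppose $G$ is minimal. Any $\alpha_{G}$-invariant subspace $V\subset C(G)$ with $V\cap i(L)=0$ is automatically a Hom-Lie ideal of $G$ (since $[V,G]=0$), and the injectivity principle forces $V=0$. Applied to an $\alpha_{G}$-invariant complement $V$ of $C(G)\cap i(L)$ inside $C(G)$, this yields $C(G)=C(G)\cap i(L)\subset i(L)$.

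The main obstacle is ensuring the existence of the $\alpha_{G}$-invariant complement used in the last step. This is where the involutivity hypothesis on $L$ is essential: one first arranges (by choosing the auxiliary complement in the construction of Proposition~4.4(2) to be $\alpha$-invariant) that $\alpha_{G}$ is itself an involution on the finite-dimensional $p$-envelope $G$. Then $\alpha_{G}|_{C(G)}$ is a finite-dimensional involution, hence diagonalizable with eigenvalues $\pm 1$ in characteristic different from $2$; since $C(G)\cap i(L)$ is $\alpha_{G}$-stable, an $\alpha_{G}$-invariant complement is obtained by splitting each eigenspace. This is the Hom-analogue of the classical argument in \cite{SF}, and once the splitting is in hand the remainder of the proof is routine.
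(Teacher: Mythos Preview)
Your overall argument tracks the paper's closely: both rest on Lemma~4.8(3), the morphisms from Proposition~4.9, and the decomposition of Proposition~4.10, and both derive (3)($\Leftarrow$) immediately from (2). The chief organizational difference is that you package the dimension-drop step as a reusable ``injectivity principle'' and, in (1), apply it in both directions to get bijectivity; the paper instead uses Proposition~4.10(1) to show $f$ is surjective and then invokes $\dim_{\mathbb{F}}G=\dim_{\mathbb{F}}G'$. In (2) the paper reaches $H=g(G')$ via Proposition~4.10(2), where you use injectivity of $f$ directly. Both routes are valid and essentially equivalent.

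You are right to flag the $\alpha_{G}$-invariance issue in (3)($\Rightarrow$): the paper simply writes $C(G)=(C(G)\cap i(L))\oplus I$ with $I\triangleleft G$ a Hom-Lie ideal and does not justify why such a complement can be chosen $\alpha_{G}$-stable. However, your proposed resolution does not close the gap. In part (3) the $p$-envelope $G$ is \emph{given}, not constructed, so you cannot ``arrange'' that $\alpha_{G}$ is an involution by revisiting the auxiliary complement in Proposition~4.4(2); that construction governs one particular envelope, not the arbitrary $G$ in the statement. Even if $\alpha_{G}^{2}=\mathrm{id}$ could be secured, your eigenspace-splitting argument fails in characteristic~$2$, where an involution need not be diagonalizable. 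So you have correctly located a subtlety that the paper glosses over, but your fix is incomplete; a genuine repair would require an independent argument that $\alpha_{G}$ is an involution on \emph{every} $p$-envelope of an involutive $L$ (for instance by showing $\alpha_{G}^{2}=\mathrm{id}$ propagates from $i(L)$ to $i(L)_{p}=G$ via compatibility with the $p$-map), together with a separate treatment of the case $p=2$.
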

\begin{proof}
(1) Let $(G,[.,.]_{G},\alpha_{G},[p],i)$ and $(G',[.,.]_{G'},\alpha_{G'},[p]',i')$
be two minimal $p$-envelopes of $L$. According to Proposition 4.9,
there exists a morphism $f:G\rightarrow G'$ such that $f\circ i=i'$.
And Proposition 4.10 yields the existence of a Hom-Lie ideal $J\subset C(G')$
and a $p$-envelope $H$ such that $G'=f(G)\oplus J$ and $H\subset f(G)$.
Moreover, it follows from the minimality of $G'$ that $H=G'$. Thus
$f(G)=G'$ and $f$ is surjective. Along with $\mathrm{dim}\mathbb{_{F}}G=\mathrm{dim}\mathbb{_{F}}G'$,
$f$ is bijective.

(2) Let $(G',[.,.]_{G'},\alpha_{G'},[p]',i')$ be a minimal $p$-envelope
of $L$. According to (1) in Proposition 4.10, we may decompose $G=g(G')\oplus J$,
where $g:G'\rightarrow G$ is a morphism such that $g\circ i'=i$.
Moreover, by virtue of (2) in Proposition 4.10, a $p$-envelope $H$
of $L$ with $H\subset g(G')$ can be found. And the minimality of
$G'$ entails that $H=g(G')$. Therefore, $G=H\oplus J$.

(3) ($\Rightarrow$) Suppose that $G$ is minimal. Let $I\triangleleft G$
be a Hom-Lie ideal with $I\cap i(L)=0$. We may write $C(G)=C(G)\cap i(L)\oplus I$.
According to (3) in Lemma 4.8, there exists a $p$-envelope $G'$
with $G'\subset G/I$. And the minimality of $G$ entails that $\mathrm{dim}{}_{\mathbb{F}}G<\mathrm{dim}{}_{\mathbb{F}}G'<\mathrm{dim}{}_{\mathbb{F}}G/I$.
Thus $I=0$ and $C(G)=C(G)\cap i(L)$, that is, $C(G)\subset i(L)$.

($\Leftarrow$) Conversely, assume that $C(G)\subset i(L)$. By (2),
we may write $G=H\oplus J$, where $H$ is a minimal $p$-envelope
of $L$ with $i(L)\subset H$ and $J\subset C(G)$ is a Hom-Lie ideal.
The assumption $C(G)\subset i(L)$ implies that $J\subset i(L)\subset H$,
that is, $J=0$. Thus $G=H$ is minimal.
\end{proof}

\end{document}